\numberwithin{equation}{section}
\newtheorem{theorem}{Theorem}[section]
\newtheorem{lemma}[theorem]{Lemma}
\newtheorem{proposition}[theorem]{Proposition}
\newtheorem{prop}[theorem]{Proposition} 
\newtheorem{lem}[theorem]{Lemma} 
\newtheorem{remark}[theorem]{Remark}
\newtheorem{notation}[theorem]{Notation}
\def\@rst #1 #2other{#1}
\newcommand{\N}{\mathbbm{N}}
\newcommand{\Z}{\mathbbm{Z}}
\newcommand{\R}{\mathbbm{R}}
\newcommand{\eps}{\varepsilon}
\newcommand{\1}{\mathbf{1}}
\DeclareMathOperator{\Var}{Var}
\def\cS{\mathcal{S}}
\def\cB{\mathcal{B}}
\newcommand{\aryb}{\begin{eqnarray*}}
\newcommand{\arye}{\end{eqnarray*}}
\def\alb#1\ale{\begin{align*}#1\end{align*}}
\newcommand{\eqb}{\begin{equation}}
\newcommand{\eqe}{\end{equation}}
\newcommand{\eqbn}{\begin{equation*}}
\newcommand{\eqen}{\end{equation*}}
\newcommand{\ol}{\overline}
\newcommand{\op}{\operatorname}
\newcommand{\frk}{\mathfrak}
\newcommand{\eqD}{\overset{d}{=}}
\newcommand{\wt}{\widetilde}
\newcommand{\wh}{\widehat}
\def\xt{{\wt {\bf x}}}
\def\x{{\bf x}}
\def\yt{{\wt {\bf y}}}
\def\y{{\bf y}}
\def\z{{\bf z}}
\def\w{{\bf w}}
\def\u{{\bf u}}
\def\ud{{\bf u}_{\mathrm d}}
\newcommand\p{{\mathbf p}}
\newcommand\sy[1]{\ifx#10 \alpha\else\ifx#11 \beta\else\gamma\fi\fi}
\newcommand\dtv{d_{\mathrm{TV}}}
\newcommand\dtvs{d_{\mathrm{TV}}^2}
\newcommand\dHe{d_{\mathrm H}}
\newcommand\dHes{d_{\mathrm H}^2}
\newcommand\st{\, ; \;}  
\newcommand\dlch{\Delta}  
\let\P\relax
\DeclareMathOperator{\P}{\mathbf{P}\mathopen{}}
\DeclareMathOperator{\E}{\mathbf{E}\mathopen{}}
\def\Psub_#1{\P_{\! #1}}
\def\Pbig#1{\P\mkern-.5mu\bigl[#1\bigr]}
\def\Psubbig_#1#2{\Psub_{#1}\mkern-1.5mu\bigl[#2\bigr]}
\def\Psubbigg_#1#2{\Psub_{#1}\mkern-1.5mu\biggl[#2\biggr]}
\def\PBig#1{\P\mkern-.5mu\Bigl[#1\Bigr]}
\def\Ebig#1{\E\mkern-1.5mu\bigl[#1\bigr]}
\def\Esubbig_#1#2{\E_{#1}\mkern-1.5mu\bigl[#2\bigr]}
\def\EsubBig_#1#2{\E_{#1}\mkern-1.5mu\Bigl[#2\Bigr]}
\def\Esupbig^#1#2{\E^{#1}\mkern-1.5mu\bigl[#2\bigr]}
\def\EsupBig^#1#2{\E^{#1}\mkern-1.5mu\Bigl[#2\Bigr]}
\def\Ebigg#1{\E\mkern-1.5mu\biggl[#1\biggr]}
\def\Esubbigg_#1#2{\E_{#1}\mkern-1.5mu\biggl[#2\biggr]}
\newcommand\dfn[1]{\textit{\textbf{#1}}}
\newcommand\flr[1]{\lfloor #1 \rfloor}
\newcommand\bflr[1]{\big\lfloor #1 \big\rfloor}
\newcommand\Bin{\mathrm{Bin}}
\newcommand\bern{\mathrm{Bernoulli}}
\newcommand\compl{^{\mathrm c}}  
\newcommand\thmenv[3]{\begin{#1} \label{#1:#2} #3 \end{#1}}
\newcommand\richthmenv[4]{\begin{#1}[#3] \label{#1:#2} #4 \end{#1}}
\def\procl#1.#2 #3\endprocl{%
       \ifx#1t\thmenv{theorem}{#2}{#3}\fi
       \ifx#1l\thmenv{lem}{#2}{#3}\fi
       \ifx#1p\thmenv{prop}{#2}{#3}\fi
       \ifx#1c\thmenv{cor}{#2}{#3}\fi
       \ifx#1d\thmenv{defn}{#2}{#3}\fi
       \ifx#1g\thmenv{conj}{#2}{#3}\fi
       \ifx#1q\thmenv{question}{#2}{#3}\fi
       \ifx#1r\thmenv{remark}{#2}{{\rm #3}}\fi
    }%
\def\rprocl#1.#2 #3 #4\endprocl{%
       \ifx#1t\richthmenv{theorem}{#2}{#3}{#4}\fi
       \ifx#1l\richthmenv{lem}{#2}{#3}{#4}\fi
       \ifx#1p\richthmenv{prop}{#2}{#3}{#4}\fi
       \ifx#1c\richthmenv{cor}{#2}{#3}{#4}\fi
       \ifx#1d\richthmenv{defn}{#2}{#3}{#4}\fi
       \ifx#1g\richthmenv{conj}{#2}{#3}{#4}\fi
       \ifx#1q\richthmenv{question}{#2}{#3}{#4}\fi
       \ifx#1r\richthmenv{remark}{#2}{#3}{{\rm #4}}\fi
    }%
\def\rref#1.#2/{%
      \ifx #1sSection~\ref{s.#2}\fi
      \ifx #1SSubsection~\ref{S.#2}\fi
      \ifx #1tTheorem~\ref{theorem:#2}\fi
      \ifx #1lLemma~\ref{lem:#2}\fi
      \ifx #1cCorollary~\ref{cor:#2}\fi
      \ifx #1pProposition~\ref{prop:#2}\fi
      \ifx #1dDefinition~\ref{defn:#2}\fi
      \ifx #1gConjecture~\ref{conj:#2}\fi
      \ifx #1qQuestion~\ref{question:#2}\fi
      \ifx #1rRemark~\ref{remark:#2}\fi
      \ifx #1aAppendix~\ref{a.#2}\fi
      \ifx #1fFigure~\ref{f.#2}\fi
      \ifx #1e(\ref{e.#2})\fi
      \ifx #1b\citet{#2}\fi
      \ifx #1B\citep{#2}\fi
        }
\newcommand\citet{\cite}
\newcommand\citep{\cite}
\def\rlabel #1 #2{\begin{equation} \label{#1} #2 \end{equation}}
\def\rproof{\begin{proof}}
\def\eqaln#1{\begin{align*} #1 \end{align*}}
\def\beginbulletitems{\begin{itemize}}
\def\endbulletitems{\end{itemize}}
\def\Qed{\end{proof}}
\def\bsection#1#2{\bigbreak\section{#1}\label{#2}}
\DeclareMathAlphabet{\mathpzc}{OT1}{pzc}{m}{it}
\begin{document}

\author{Nina Holden}
\address{Department of Mathematics,	Massachusetts Institute of Technology} 
\email{\href{mailto:ninahold@gmail.com}{ninahold@gmail.com}}
\thanks{%
N.H.\ is partially supported by an internship at Microsoft Research and by a fellowship from the Norwegian Research Council.}

\author{Russell Lyons}
\address{Department of Mathematics, 831 E. 3rd St.,
Indiana University, Bloomington, IN 47405-7106} 
\thanks{%
R.L.\ is partially supported by the National
Science Foundation under grant DMS-1612363.}
\email{\href{mailto:rdlyons@indiana.edu}{rdlyons@indiana.edu}}

\title[Lower bounds for trace reconstruction]{Lower bounds for trace reconstruction}
\date{\today}

\subjclass[2010]{Primary 
62C20, 
68Q25, 
68W32; 
Secondary
68W40, 
68Q87, 
60K30
}

\keywords{Strings, deletion channel, sample complexity.}

\begin{abstract}
In the trace reconstruction problem, an unknown bit string  $\x\in\{0,1 \}^n$ is sent through a deletion channel where each bit is deleted independently with some probability $q\in(0,1)$, yielding a contracted string $\xt$. How many i.i.d.\ samples of $\xt$ are needed to reconstruct $\x$ with high probability? We prove that there exist $\x,\y\in\{0,1 \}^n$ such that at least $c\, n^{5/4}/\sqrt{\log n}$ traces are required to distinguish between $\x$ and $\y$ for some absolute constant $c$, improving the previous lower bound of $c\,n$. Furthermore, our result  improves the previously known lower bound for reconstruction of random strings from $c \log^2 n$ to $c \log^{9/4}n/\sqrt{\log \log n} $.
\end{abstract}

\maketitle

\bsection{Introduction}{s.intro}

In trace reconstruction, the goal is to reconstruct an unknown bit string $\x=(x_1,\dots,x_{n}) \in \cS_n:=\{0,1 \}^n$ from noisy observations of $\x$. Here we study the case where the noise is due to a deletion channel in which each bit is deleted independently with a fixed probability $q\in(0,1)$. More precisely, instead of observing $\x$, we observe many independent strings $\wt\x$ obtained by the following procedure for $k=1,\dots,n$, starting from an empty string.
\begin{itemize}
	\item (retention) With probability $p:=1-q$, copy $x_k$ to the end of $\xt$ and increase $k$ by one.
	\item (deletion) With probability $q$, only increase $k$ by one.
\end{itemize}
See Figure \ref{fig:maketrace} for an illustration. We are {\em not} given the locations of the retained bits in the original string.

\begin{figure}[ht]	
	\begin{center}
		\includegraphics[scale=1.2]{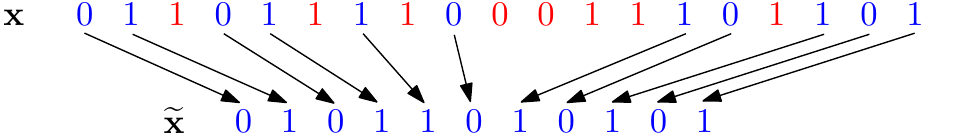}
		\caption{We obtain the trace $\xt$ by 
			deleting (red) or copying (blue) each bit of $\x$. }
		\label{fig:maketrace}
	\end{center}
\end{figure}

For $T\in\N$, we consider a collection $\frk X=\{ \xt^{(1)},\dots,\xt^{(T)}
\}$ of $T$ independent outputs (called ``traces'') from the deletion channel. Our main question is the following: How many traces are needed to reconstruct $\x$ with high probability? A closely related question is, given strings $\x$ and $\y$, how many traces are needed to determine whether the input string was $\x$ or $\y$. See Section \ref{sec1} for a more precise problem statement.

\subsection{History and Results}
This problem was introduced by Batu, Kannan, Khanna, and McGregor \cite{BKKM04} as an abstraction and simplification of a fundamental problem in bioinformatics, where one desires to reconstruct a common ancestor of several organisms given genetic sequences from those organisms. Other kinds of changes can be present besides deletions, but deletions present a key difficulty. See \cite{BKKM04} for more details.

De, O'Donnell, and Servedio \cite{DOS16} and Nazarov and Peres \cite{NaPe16} prove that any string $\x\in\{ 0,1\}^n$ can be reconstructed with $\exp\left(O(n^{1/3})\right)$ traces, using the single-bit statistics of the trace. This improves the earlier upper bound of $\exp\left(n^{1/2}\op{polylog}(n)\right)$ proved by Holenstein, Mitzenmacher, Panigrahy, and Wieder \cite{HMPW08} (see \cite{MPV14} for an alternative proof). 

Previous to our paper, the best available lower bound for the number of traces needed for reconstruction was $\Omega(n)$. For example, the pair of strings $\x'_n=(0)^{n-1}1(0)^{n}\in\cS_{2n}$ and $\y'_n=(0)^{n}1(0)^{n-1}\in\cS_{2n}$ (where $(b)^m$ means a string of $m$ consecutive $b$s) requires $\Omega(n)$ traces to be distinguished (\cite[Section 4.2]{BKKM04}, \cite[Corollary 1]{MPV14}).  
Our main result is an improvement of this lower bound. Define the strings $\x_n,\y_n\in\cS_{4n}$ to be
$$
\x_n := (01)^{n-1} 10 (01)^n,\qquad \y_n := (01)^n 10 (01)^{n-1}.$$ 
These strings are periodic with period $01$ except for a single ``defect" where the period is replaced by $10$.
They can be obtained from $\x'_n$ and $\y'_n$ by replacing each $0$ with $01$ and $1$ with $10$.

\begin{theorem}\label{prop1}
	For all $q \in (0, 1)$, there is a constant $c>0$ such that for all $\eps\in(0,1/2)$ and $n \ge 2$,
	at least $T_n:=\lceil c \log(1/\eps) n^{5/4}/\sqrt{\log n}\, \rceil$ traces are required to distinguish
   between $\x_n$ and $\y_n$ with probability at least $1-\eps$. 
   In particular, $T_n$ traces are required to reconstruct all $n$-bit strings with probability at least $1-\eps$.
\end{theorem}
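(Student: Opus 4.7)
The plan is to reduce the problem to a total variation bound for a single trace, and then estimate that bound via a Parseval-type identity that exploits a key algebraic feature of $\x_n - \y_n$.

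First, I would invoke the standard testing reduction: by the tensorization formula $\dHes(P^{\otimes T}, Q^{\otimes T}) = 2 - 2(1 - \dHes(P,Q)/2)^T$ for the Hellinger affinity, distinguishing two laws $P, Q$ from $T$ i.i.d.\ samples with error probability at most $\eps$ requires $T \ge c \log(1/\eps)/\dtv(P,Q)$ (using $\dHes \le 2\dtv$). Writing $P_\x$ for the law of one trace from $\x$, the theorem thus reduces to the single-sample estimate
\[
  \dtv(P_{\x_n}, P_{\y_n}) \le C\,\frac{\sqrt{\log n}}{n^{5/4}}.
\]

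Second, I would exploit the algebraic structure of $\x_n - \y_n$. These strings agree everywhere except at positions $2n-1, 2n, 2n+1, 2n+2$, where $(x_k - y_k) = (1,-1,-1,1)$, so the difference generating polynomial factors as
\[
  f(w) \;:=\; \sum_{k=1}^{4n}(x_k-y_k)w^{k-1} \;=\; w^{2n-2}(1-w)^2(1+w),
\]
with a \emph{double} zero at $w=1$. This is the source of the improvement over the classical pair $\x'_n, \y'_n$, whose analogous polynomial has only a simple zero. Viewing the trace as the push-forward of an i.i.d.\ Bernoulli-$p$ retention pattern $R \in \{0,1\}^{4n}$ under $R \mapsto \x|_R$, a Walsh--Fourier (Parseval) argument on the Boolean cube, combined with the substitution $w = pz + q$, should yield a bound of the form
\[
  \dtv(P_{\x_n}, P_{\y_n})^2 \;\lesssim\; \log n \cdot \int_0^{2\pi} |g(e^{i\theta})|^2 \,\frac{d\theta}{2\pi},
\]
where $g(z) := p z\,f(pz+q) = p^3 z(1-z)^2(1+q+pz)(pz+q)^{2n-2}$. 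On the unit circle, $|pz+q|^{4n-4} = (1 - 4pq\sin^2(\theta/2))^{2n-2}$ is Gaussian-concentrated on the scale $|\theta| \lesssim 1/\sqrt{n}$, while $|1-z|^4 \asymp \theta^4$ there, so the change of variables $v = \theta\sqrt{n}$ gives $\int|g|^2\,d\theta \asymp n^{-5/2}$ and hence the required bound.

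The main obstacle is the Parseval-type reduction itself. A crude Cauchy--Schwarz $\dtv^2 \le |\supp|\cdot \|P-Q\|_2^2$ loses an exponential factor in $n$, while restricting to linear (single-bit) test statistics is insufficient because a clever nonlinear test could in principle outperform them. Bypassing these two dead ends requires working at the level of the retention-pattern product measure, where $L^2$ methods apply cleanly, and then propagating the bound back to $\dtv$ while controlling the non-injective fibers of the push-forward $R \mapsto \x|_R$; the extra $\sqrt{\log n}$ factor naturally arises from this bookkeeping.
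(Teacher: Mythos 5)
Your reduction is too lossy, and the target estimate it feeds into is actually false. You propose to show $\dtv(P_{\x_n},P_{\y_n}) \le C\sqrt{\log n}/n^{5/4}$ and then conclude via $T \gtrsim \log(1/\eps)/\dtv$. But the paper proves (Proposition~\ref{TVbounds}) a matching \emph{lower} bound $\dtv(\dlch_{\x_n},\dlch_{\y_n}) \ge C^{-1} n^{-3/4}(\log n)^{-1/2}$, so the total variation distance is genuinely of order $n^{-3/4}$ (up to logs), not $n^{-5/4}$. The single-sample estimate you want does not hold, and $T \gtrsim 1/\dtv$ would only give $\Omega(n^{3/4})$ traces — exactly the obstruction the authors point out in Section~\ref{sec2}. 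The sample complexity of a two-point testing problem is governed by the \emph{Hellinger} distance, not by total variation; since $\dtv \le \dHe \le \sqrt{2\dtv}$ these can be off by a square root, and in this problem they are. The paper's actual route is to prove $\dtv = O(n^{-3/4})$ \emph{and} a pointwise ratio bound $\|(\mu_2 - \mu'_2)/(\mu_1+\mu_2)\|_{\ell^\infty} = O(n^{-1/2}\sqrt{\log n})$ on suitable sub-measures, then combine them via the elementary inequality of Lemma~\ref{prop2}, $\dHes(\mu,\nu) \le \mu\{\nu = 0\} + 2\|(\mu-\nu)/\nu\|_{\ell^\infty(\nu)}\,\dtv(\mu,\nu)$, to get $\dHes = O(n^{-5/4}\sqrt{\log n})$; then Lemma~\ref{lem:powerTV-Hel} gives $T = \Omega(1/\dHes)$. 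That intermediate step — upgrading a TV bound to a Hellinger bound via an $\ell^\infty$ ratio estimate — is the missing idea in your proposal and is essential to get past $n^{3/4}$.

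Your algebraic observation that $f(w) = w^{2n-2}(1-w)^2(1+w)$ has a double zero at $w=1$ is correct and is indeed the structural reason these strings are hard to tell apart by single-bit statistics, which is a nice way to motivate the choice of $\x_n, \y_n$. But a Parseval/Fourier estimate controls an $L^2$-type quantity on single-bit marginals, and you rightly note there is no clean route from there to a TV bound — and since the TV bound you want is in fact false, no amount of bookkeeping on the fibers of $R\mapsto \x|_R$ will produce it. The paper does not use Fourier at all: instead it builds an explicit two-stage coupling of the traces (first coupling 01-block survivals as binomials, then, on the failure event, locating a matching 0101-block via Liggett's theorem on Bernoulli sequences), and this coupling is what yields both the $O(n^{-3/4})$ bound on $\dtv$ and the $\ell^\infty$ ratio estimate needed for Lemma~\ref{prop2}.
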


The following proposition is a partial converse to Theorem \ref{prop1}, and says that with $O\bigl(n^{3/2} \log n\bigr)$ traces we can distinguish between the strings $\x_n$ and $\y_n$.  
\begin{proposition}\label{prop3}
	For all $q \in (0, 1)$, there is a constant $C>0$ such that for all $\eps\in(0,1/2)$ and $n\geq 2$, $\>\lceil C\log(1/\eps)n^{3/2} \log n\rceil$ traces suffice to distinguish between $\x_n$ and $\y_n$ with probability at least $1-\eps$. 
\end{proposition}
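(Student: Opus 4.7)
The plan is a standard hypothesis test based on the single-bit marginals of the trace, supplemented with an aggregation over a window of nearby indices. Define $a_i := \P[\xt_i = 1 \mid \x_n]$ and $b_i := \P[\xt_i = 1 \mid \y_n]$. Writing $a_i - b_i = \sum_j (x_j - y_j) f_i(j)$, where $f_i(j) := \P[I_i = j]$ and $I_i$ denotes the input-position of the $i$-th retained bit, and noting that $\x_n - \y_n$ is supported on $j \in \{2n-1, 2n, 2n+1, 2n+2\}$ with values $(+1, -1, -1, +1)$, one obtains
\[
a_i - b_i \;=\; f_i(2n-1) - f_i(2n) - f_i(2n+1) + f_i(2n+2),
\]
which is a centered discrete second difference of $f_i$ and therefore approximates $2 f_i''(2n)$ when $f_i$ is smooth in $j$.

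Next I quantify the magnitude. Since $I_i$ is negative binomial with mean $i/p$ and variance $iq/p^2$, the local central limit theorem says $f_i$ is close to a Gaussian density with those parameters, so $f_i''(j)$ is of order $\sigma_i^{-3}$ near the peak. Choosing $i^* := \lceil 2np \rceil$, so that the peak of $f_{i^*}$ lies at $2n$, yields $|a_{i^*} - b_{i^*}| = \Theta(n^{-3/2})$, and this order persists for all $i$ in a window of width $\Theta(\sqrt n)$ around $i^*$.

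For the test itself, the natural candidate is the empirical mean $\widehat a_i := T^{-1} \sum_{t=1}^T \xt^{(t)}_i$, with $\xt^{(t)}_i := 0$ by convention on traces of length less than $i$; the classification rule picks $\x_n$ or $\y_n$ according to whichever of $a_i, b_i$ the statistic $\widehat a_i$ is closer to. A single index $i = i^*$, combined with Hoeffding's inequality, would already yield the bound $T = \Theta(n^3 \log(1/\eps))$. To reduce this to the claimed $T = O(n^{3/2}\log n \cdot \log(1/\eps))$, one aggregates the empirical means across all $i$ in the $\Theta(\sqrt n)$-wide window about $i^*$ using weights $w_i$ proportional to $a_i - b_i$ (or via a likelihood-ratio-type combination), and applies a Chernoff bound together with a union bound over the window; the latter accounts for the $\log n$ factor. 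The main obstacle is bounding the variance of the aggregated statistic: while $\xt_i$ and $\xt_j$ within a single trace are positively correlated, the correlation should decay rapidly in $|i-j|$, and an $L^2$-style estimate on the covariance kernel $\operatorname{Cov}(\xt_i, \xt_j)$ is expected to provide the required variance bound.
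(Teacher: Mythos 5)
Your identification of $a_i - b_i$ as the discrete second difference $f_i(2n-1)-f_i(2n)-f_i(2n+1)+f_i(2n+2)$, and the resulting estimate $|a_{i^*}-b_{i^*}| = \Theta(n^{-3/2})$ for $i^*$ near $2np$, are both correct. But the aggregation step cannot deliver the claimed $T = O(n^{3/2}\log n\cdot\log(1/\eps))$; a test built from the single-bit marginals over a $\Theta(\sqrt n)$-wide window gives only $T = \Theta(n^{5/2})$, which is worse than the proposition asserts. To see this, note that any linear statistic $\sum_i w_i\widehat a_i$ has per-trace signal-to-noise ratio at most $(a-b)^{\top}\Sigma^{-1}(a-b)$, where $\Sigma$ denotes the covariance matrix of the trace bits $(\xt_i)_i$. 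Since $|a_i - b_i|\approx n^{-3/2}$ on $\Theta(\sqrt n)$ indices (with Gaussian decay outside), one has $\|a-b\|_2^2 \approx n^{-5/2}$; moreover $\Cov(\xt_i,\xt_j)$ decays geometrically in $|i-j|$ (the parity of $I_j - I_i$ mixes at rate $(1-q)/(1+q)<1$), so $\Sigma$ is well-conditioned and $(a-b)^{\top}\Sigma^{-1}(a-b) = \Theta(n^{-5/2})$. Neither the particular choice of weights nor the union bound over the window changes this order; the union bound in particular does not combine the per-index tests into anything with better SNR. So the proposed test requires $T = \Theta(n^{5/2})$ traces, a genuine gap.

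The paper instead works with a two-bit statistic: $Z(\xt)$ as in \eqref{defZ} counts positions $k$ in the window $(2np, 2np + \sqrt{npq}\,]$ with $\xt_k = \xt_{k+1} = 1$. Lemma~\ref{prop7} shows $\E_{\y_n}[Z(\yt_n)]-\E_{\x_n}[Z(\xt_n)]=\Theta(n^{-1/2})$; the essential point is that the contribution to this shift from position $k$ grows \emph{linearly} in the offset $\xi := k - 2np$, of order $\Theta(\xi\, n^{-3/2})$, unlike the essentially constant $\Theta(n^{-3/2})$ of the single-bit marginals. Lemma~\ref{prop20} shows $Z$ concentrates at scale $n^{1/4}$, and Lemma~\ref{prop21} converts these into the lower bound of Proposition~\ref{TVbounds}, $\dtv(\Delta_{\x_n},\Delta_{\y_n}) = \Omega\bigl(n^{-3/4}(\log n)^{-1/2}\bigr)$, after which \eqref{numberofsamples} yields the stated sample complexity. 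The missing idea in your proposal is thus to look at consecutive-pair statistics rather than single-bit marginals: the latter are quantitatively too weak to reach the $n^{3/2}$ rate.
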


\begin{remark}
		Building on a prior version of this paper, Zachary Chase \cite{chase-lower} recently strengthened our result by proving that Theorem \ref{prop1} still holds with $T_n:=\lceil cn^{3/2}/\log^{16}n \rceil$, where $c$ depends on $q$ and $\eps$. This means that our upper bound in Proposition \ref{prop3} is optimal up to a logarithmic factor. Using the stronger version of Theorem \ref{prop1}, Chase also improved the lower bound for random strings in Proposition \ref{prop4} below to $\lceil c\log^{5/2} n/(\log\log n)^{16}\rceil$.
\end{remark}
In general, the number of samples required to distinguish two probability measures is related to, but not determined by, the total variation distance between those measures; our appendix reviews the precise relationships. 
Given a string $\x$ and a deletion probability $q \in (0, 1)$, write $\dlch_{\x}$ for the law of the trace we obtain when applying the deletion channel with deletion probability $q$ to $\x$.  Note that the dependence on $q$ is hidden in the notation $\dlch_{\x}$.
The result of Proposition \ref{prop3} follows from the lower bound on the total variation distance $\dtv(\dlch_{\x_n},\dlch_{\y_n})$ in the following proposition:

\begin{prop}\label{TVbounds}
	For all $q \in (0, 1)$, there is a constant $C>0$ such that for all $n\ge 1$, the total variation distance $\dtv(\dlch_{\x_n},\dlch_{\y_n})$ between $\dlch_{\x_n}$ and $\dlch_{\y_n}$ satisfies
	\eqbn
	C^{-1}n^{-3/4}(\log n)^{-1/2}
	\leq 
	 \dtv(\dlch_{\x_n},\dlch_{\y_n})
	 \leq 
	 Cn^{-3/4}.
	\eqen 
\end{prop}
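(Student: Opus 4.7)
Proposed plan. We prove the upper and lower bounds on $d_{\mathrm{TV}}(\Delta_{\x_n},\Delta_{\y_n})$ separately.

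\emph{Upper bound.} I would exploit that $\x_n$ and $\y_n$ coincide outside the four-position window $W=\{2n-1,2n,2n+1,2n+2\}$, with both equal to $(01)^{n-1}$ on positions $\{1,\dots,2n-2\}$ and on $\{2n+3,\dots,4n\}$. Conditioning on which subset $R\subseteq W$ is retained, the trace factors as $\wt\u^L\cdot\wt{\bd m}_R\cdot\wt\u^R$, where $\wt\u^L,\wt\u^R$ are independent traces of $(01)^{n-1}$ whose law does not depend on the input, and $\wt{\bd m}_R$ is the deterministic substring of the input at the retained positions of $W$. A direct enumeration over the $16$ subsets $R$ shows that the laws of $\wt{\bd m}_R$ under $\x_n$ and $\y_n$ coincide whenever $|R|\le 2$. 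Hence the signed measure $\nu=\text{Law}_{\x_n}(\wt{\bd m})-\text{Law}_{\y_n}(\wt{\bd m})$ is supported on eight middles of lengths $3$ and $4$, naturally paired into four bit-complementary pairs $\{M,\bar M\}$ with $\nu(M)=-\nu(\bar M)$, and writing $F_M$ for the law of $\wt\u^L\cdot M\cdot\wt\u^R$ gives
\[
\Delta_{\x_n}-\Delta_{\y_n}=\sum_{\{M,\bar M\}}\nu(M)(F_M-F_{\bar M}).
\]
The problem reduces to showing $d_{\mathrm{TV}}(F_M,F_{\bar M})\le Cn^{-3/4}$ for each of the four pairs. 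I would prove this via a shift coupling: since $(01)^{n-1}$ is almost $2$-shift-invariant in law, one can convert $M$ into $\bar M$ by shifting the alternating flanks by two positions, and the TV cost is controlled by a second-order difference of the local CLT approximation to the binomial density of $|\wt\u^L|$, yielding $n^{-3/4}$.

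\emph{Lower bound.} To establish $d_{\mathrm{TV}}\ge cn^{-3/4}/\sqrt{\log n}$ I would apply the Chebyshev-type inequality
\[
d_{\mathrm{TV}}(\Delta_{\x_n},\Delta_{\y_n}) \ge \frac{|\E_{\x_n}\Phi - \E_{\y_n}\Phi|}{2\sqrt{\max(\Var_{\x_n}\Phi,\Var_{\y_n}\Phi)}}
\]
with an explicit test statistic $\Phi$ capturing the position of the middle defect in the trace. The natural candidate is a weighted count $\Phi=\sum_j w_j\,\mathbf{1}\{\wt x_j=\wt x_{j+1}\}$ of consecutive-equal-bit pairs in the trace, with antisymmetric weights $w_j=\mathrm{sign}(j-\lfloor 2pn\rfloor)$ supported on a window of width $\asymp\sqrt{n\log n}$ around the expected image $2pn$ of the middle defect. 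The middle defect contributes $\Theta(1)$ excess defect pairs localized near a random trace-position with mean $2pn$ under $\x_n$ (respectively $2pn+2p$ under $\y_n$) and standard deviation $\sqrt{2pqn}$; differentiating in the shift yields a signal of order $n^{-1/2}$. The variance is $\Theta(\sqrt{n\log n})$ by the approximate independence of the defect indicators (the inter-retained-position gaps are iid geometric, making the indicators iid Bernoulli to leading order). The signal-to-noise ratio then gives the required $n^{-3/4}/\sqrt{\log n}$ lower bound.

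\emph{Main obstacle.} The hardest step is the pairwise bound $d_{\mathrm{TV}}(F_M,F_{\bar M})\le Cn^{-3/4}$ in the upper bound: naive couplings (identical retained sets) give only a constant TV, and extracting $n^{-3/4}$ requires a delicate second-order local CLT estimate for the binomial prefix length combined with a nontrivial reshuffling of bits between the middle and the flanking traces. The variance control in the lower bound is also subtle because the defect indicators are genuinely correlated at short distances and deviate from the iid model near the middle defect.
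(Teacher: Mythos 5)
Your decomposition for the upper bound — conditioning on the retained subset $R$ of the four-position defect window, observing that the conditional middle laws coincide for $|R|\le 2$, and reducing to four bit-complementary pairs — is a correct and tidy reformulation. But it does not actually simplify the crux: for the pair $M=1001$, $\bar M = 0110$, one has $(01)^{n-1}\cdot 1001\cdot(01)^{n-1}=(01)^{n-1}10(01)^n=\x_n$ and $(01)^{n-1}\cdot0110\cdot(01)^{n-1}=\y_n$, so $d_{\mathrm{TV}}(F_M,F_{\bar M})$ is essentially the original problem again (conditioned on the defect surviving). A ``shift coupling controlled by a second-order local CLT for the binomial prefix length'' does not deliver $n^{-3/4}$: a naive identical-deletion coupling fails by a constant, and comparing $\mathrm{Bin}(n,p)$ against $\mathrm{Bin}(n\pm1,p)$ — with or without second-order local CLT corrections — only gives a TV of order $n^{-1/2}$. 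The missing idea is the one the paper's Lemma \ref{prop10} is built around: run a first coupling that fails with probability $\Theta(n^{-1/2})$, and on the failure event do a \emph{second} coupling that matches the defect to a randomly located $0101$-block in the periodic background. That second step is where Theorem \ref{thm:lig} (Liggett's theorem on Bernoulli processes) enters: it supplies a random offset $X$ with $\P[X>m]\lesssim m^{-1/2}$ such that after shifting by $X$ the conditional law of the background is unchanged, and integrating $X/\sqrt n$ against that tail gives the extra $n^{-1/4}$. Without some substitute for this mechanism, your plan is missing the key ingredient.

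For the lower bound, your test statistic (a weighted coincidence count in a $\Theta(\sqrt{n\log n})$-window around position $2pn$) and the identification of the $\Theta(n^{-1/2})$ mean gap are in the same spirit as the paper's $Z$ in \eqref{defZ} and Lemma \ref{prop7}. However, the inequality
\[
d_{\mathrm{TV}}(\Delta_{\x_n},\Delta_{\y_n}) \ge \frac{|\E_{\x_n}\Phi - \E_{\y_n}\Phi|}{2\sqrt{\max(\Var_{\x_n}\Phi,\Var_{\y_n}\Phi)}}
\]
is \emph{false} in general; e.g., take $\Phi$ equal to $0$ with probability $1-\eps$ and $1/\eps$ with probability $\eps$ under $\mu$, and equal to $0$ with probability $1-2\eps$ and $1/\eps$ with probability $2\eps$ under $\nu$; then $d_{\mathrm{TV}}=\eps$ while the right-hand side is of order $\sqrt\eps$. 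Variance control alone never yields a useful TV lower bound when the signal-to-noise ratio is small; one needs genuine sub-Gaussian tails. This is exactly what the paper supplies in Lemma \ref{prop20} (concentration of $Z$ at scale $n^{1/4}$, proved via a high-probability-bounded-differences version of McDiarmid), which is then converted into the TV lower bound via Lemma \ref{prop21}, at the cost of the $\sqrt{\log n}$ factor. You also lose an unnecessary extra logarithmic factor by taking a window of width $\sqrt{n\log n}$ rather than $\sqrt{n}$. So the structure of your lower bound is right, but you are missing the concentration estimate, and the inequality as stated does not hold.
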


Above we considered reconstruction of arbitrary strings in $\cS_n$. 
The number of traces required to reconstruct an arbitrary $\x \in \cS_n$ is known as the \dfn{worst-case reconstruction problem}. We require that there exists a reconstruction algorithm such that no matter what the input string is, this string is found with high probability by the algorithm. One can also consider the \dfn{average-case reconstruction problem}. Letting $\mu_n$ denote the uniform probability measure on $\cS_n$, we assume the input string $\x$ is sampled from $\mu_n$. The question now is: 
What $T$ ensures a large probability of reconstructing $\x$?
We require that there exists a reconstruction algorithm such that if $\x\sim\mu_n$, then the algorithm identifies $\x$ with high probability when we average over both the randomness of $\x$ and the randomness of the traces. In effect, this allows us to consider only $\x\in A_n$, where $A_n\subset\cS_n$ is a set of large $\mu_n$-measure and $\cS_n\setminus A_n$ is a set of strings that are particularly difficult to reconstruct. 

Using the lower bound of $\Omega(n)$ for worst-case strings, McGregor, Price, and Vorotnikova \cite{MPV14} proved that $\Omega(\log^2n)$ traces are needed to reconstruct random strings. Following \cite{MPV14}, we state and prove a general result for transferring lower bounds for worst-case strings to lower bounds for random strings. We use this and Theorem \ref{prop1} to prove Proposition \ref{prop4}, which improves the earlier lower bound for random strings. 
\begin{prop}\label{prop4}
	For all $q \in (0, 1)$, there is a constant $c>0$ such that for all large $n$, the probability of reconstructing random $n$-bit strings from $\lceil c\log^{9/4} n/\sqrt{\log\log n}\,\rceil$ traces is at most $\exp(-n^{0.15})$.
\end{prop}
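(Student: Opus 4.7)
The plan is to prove and then apply a general lemma, in the style of \cite{MPV14}, that transfers any worst-case trace-reconstruction lower bound into a lower bound for random strings, and then to feed in the stronger worst-case bound of Theorem \ref{prop1}. Fix a small constant $\alpha\in(0,1/4)$ to be chosen later, and set $m:=\lfloor\alpha\log n\rfloor$, so that the hard pair $(\x_m,\y_m)\in\cS_{4m}\times\cS_{4m}$ is well defined for all large $n$. Theorem \ref{prop1} then says that for each $\eps\in(0,1/2)$, any algorithm distinguishing $\x_m$ from $\y_m$ from fewer than $c\log(1/\eps)\,m^{5/4}/\sqrt{\log m}$ traces has error probability at least $\eps$.

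Call a position $i\in\{1,\dots,n-4m-4L+1\}$ a \emph{gadget position} in a string $\x\in\cS_n$ if, for a large constant context length $L=L(q)$ to be fixed, the length-$(4m+4L)$ substring of $\x$ starting at $i$ has the form $(01)^L\,g\,(01)^L$ with $g\in\{\x_m,\y_m\}$. For $X$ uniform on $\cS_n$, the probability that a given position is a gadget position equals $2\cdot 2^{-(4m+4L)}$, which is of order $n^{-4\alpha}$. A standard second-moment argument on occurrences at well-separated positions (distance $\gg 4m+4L$) shows that, with probability $1-o(1)$, the number $K$ of disjoint gadget positions satisfies $K\geq n^{1-4\alpha-o(1)}$. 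Conditionally on $K$ and on the gadget locations, the gadget types $B_1,\dots,B_K\in\{\x_m,\y_m\}$ are independent and uniformly distributed, and any reconstruction of $X$ must recover all of $(B_1,\dots,B_K)$.

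The purpose of the long alternating-$01$ context on each side of a gadget is to serve as a recognizable marker within a trace: for $L$ large enough as a function of $q$, with probability $1-O(2^{-cL})$ a given trace contains enough surviving $01$ patterns on both sides of each gadget to unambiguously locate the substring of the trace coming from that gadget. On the event that this alignment succeeds at every gadget in every trace, determining $B_j$ reduces to the distinguishing problem of Theorem \ref{prop1} for gadget $j$, and the sub-problems for different $j$ become conditionally independent. Hence, provided $T\leq c'\log^{9/4} n/\sqrt{\log\log n}$ with $c'$ small enough, Theorem \ref{prop1} applied with the error parameter chosen so that $T$ equals its lower bound $c\log(1/\eps)\,m^{5/4}/\sqrt{\log m}$ yields $\log(1/\eps)\asymp (c'/c)\,\alpha^{-5/4}\log n$, and hence a per-gadget error probability of at least $n^{-\gamma}$, where $\gamma=\gamma(c',\alpha)$ can be made as small as we wish by shrinking $c'$.

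Because the per-gadget errors are conditionally independent, the probability of correctly recovering all $K$ gadgets is at most $(1-n^{-\gamma})^K\leq \exp\bigl(-Kn^{-\gamma}\bigr)\leq \exp\bigl(-n^{1-4\alpha-\gamma-o(1)}\bigr)$. Choosing, for example, $\alpha=1/20$ and $c'$ so small that $4\alpha+\gamma<0.85$ gives $1-4\alpha-\gamma>0.15$, so the reconstruction probability is at most $\exp(-n^{0.15})$ for all sufficiently large $n$. The main obstacle is the independence-of-sub-problems step above, since the deletion channel is global: a single deletion shifts all later bits, so sub-problems at different gadget positions are not a priori decoupled in a single trace. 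The key technical input, inherited from \cite{MPV14}, is that a long alternating-$01$ context acts as a synchronization marker with failure probability exponentially small in $L$, allowing a clean per-gadget reduction to the distinguishing problem of Theorem \ref{prop1}.
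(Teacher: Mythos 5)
Your approach differs substantially from the paper's, and it has a genuine gap. The paper's proof of the transfer lemma (Proposition~\ref{prop5}) entirely sidesteps the synchronization problem that you identify as the crux: rather than trying to locate block boundaries \emph{within} a trace, it observes that providing the reconstructor with the \emph{separate} per-block traces $\frk Z_1,\dots,\frk Z_{\flr{n/r}},\frk W$ (whose concatenation is the full trace $\frk X$) can only help, since $\frk X$ is a deterministic function of the $\frk Z_j$'s and $\frk W$. Thus the probability of reconstructing from $\frk X$ is at most the probability of reconstructing from the separate block traces, which decomposes cleanly into independent per-block problems with no alignment step at all. To turn the worst-case two-string lower bound (Theorem~\ref{prop1}) into a per-block statement, the paper uses Yao's minimax principle to extract a hard prior $\rho$ on $r$-bit strings with $r=\flr{\frac12\log n}$, and then writes the uniform measure $\lambda$ on $\cS_r$ as the mixture $\lambda=2^{-r}\rho+(1-2^{-r})\sigma$, so that a uniform block is, with probability $2^{-r}$, a sample from the hard prior. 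Each block then independently fails to be reconstructed with probability at least $2^{-r}e^{-r}$, which yields $\exp(-n^{0.15})$.

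Your proposal instead builds gadgets $(01)^L\,g\,(01)^L$ inside a uniform $n$-bit string and tries to decouple them via the alternating-$01$ context as a synchronization marker. The decisive step --- ``on the event that this alignment succeeds at every gadget in every trace, determining $B_j$ reduces to the distinguishing problem of Theorem~\ref{prop1} for gadget $j$, and the sub-problems for different $j$ become conditionally independent'' --- is not justified, and it is not clear how to justify it. Two concrete problems: first, conditioning on the alignment-success event conditions on the deletion pattern, which biases the conditional law of the per-gadget trace away from the unconditioned one appearing in Theorem~\ref{prop1}; a direct application of Theorem~\ref{prop1} to the conditional law is therefore not licensed. Second, even granting perfect alignment, the reconstructor sees the full trace, not the per-gadget traces with declared boundaries, and the information it has about gadget $j$ is entangled with the random context; showing this information is no greater than the information in an isolated per-gadget trace is precisely the content of the paper's ``extra information'' observation, which you have not reproduced. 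Also note the attribution is off: \cite{MPV14} does not use synchronization markers for its lower bound; it uses the same extra-information decoupling as Proposition~\ref{prop5}. The synchronization-marker technique is a tool for \emph{upper} bounds (algorithms) on random-string reconstruction, where one must actively locate structure in traces; for a lower bound it is both unnecessary and harder to make rigorous. If you want to repair your proof, the cleanest route is to replace the marker argument by the paper's observation that giving the adversary per-block traces can only help, at which point the markers become superfluous and you may as well partition the string into contiguous length-$r$ blocks directly, together with the Yao/mixture trick to convert the worst-case two-string bound into a statement about a random block.
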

Upper bounds for  random strings are studied in \cite{BKKM04,PZ17,HPP18}. In particular, it is proved in \cite{HPP18} that $e^{O(\log^{1/3}n)}=n^{o(1)}$ traces suffice for reconstruction of random strings with any deletion probability $q\in(0,1)$.

We use the following notation throughout the paper.
\begin{notation}\label{not1}
		For two functions $f, g \colon \N \to [0, \infty)$, we write $f(n) = O\bigl(g(n)\bigr)$ if there is a constant $C>0$ such that for all sufficiently large $n$, $\>f(n) \le C\, g(n)$; $f(n) = \Omega\bigl(g(n)\bigr)$ if there is a constant $c>0$ such that for all sufficiently large $n$, $\>f(n) \ge c\, g(n)$; $f(n) = \Theta\bigl(g(n)\bigr)$ if both $f(n) = O\bigl(g(n)\bigr)$ and $f(n) = \Omega\bigl(g(n)\bigr)$; and $f(n) = o\bigl(g(n)\bigr)$ if $\lim_{n \to\infty} f(n)/g(n) = 0$. Unless otherwise specified, all constants $c,c_0,c_1,\dots,C,C_0,C_1,\dots$ and implicit constants in $\Omega(\cdot),\Theta(\cdot),O(\cdot)$ may depend on the deletion probability $q\in(0,1)$, but are independent of all other parameters.
		
      For $\x\in\cS_n$, let $\Psub_{\x}$ and $\E_{\x}$ denote probability and expectation, respectively, for the deletion channel with input string $\x$. The deletion probability is fixed and always denoted by $q$.
\end{notation}

We remark that the trace reconstruction problem has a somewhat similar flavor to the problem of reconstructing a random scenery from the observations along a random walk path \cite{bk96,mr03,mr06,mp11,hmm15}. However, to our knowledge no non-trivial lower bounds have been proved for the scenery reconstruction problem.

In the remainder of the introduction, we give a precise description of the trace reconstruction problem. We prove Theorem \ref{prop1} and the upper bound of Proposition \ref{TVbounds} in Section \ref{sec2}, Proposition \ref{prop3} and the lower bound of Proposition \ref{TVbounds} in Section \ref{sec3}, and Proposition \ref{prop4} in Section \ref{sec4}. The appendix contains some useful information about distances between probability measures and how they relate to the statistical problem of distinguishing two measures.

\smallskip
{\bf Acknowledgements: } We thank the anonymous referees for useful comments and careful reading of our paper. Most of this paper was written while visiting Microsoft Research Redmond, and we thank Microsoft for the hospitality.

\subsection{The trace reconstruction problem}
\label{sec1}
Let $\cS:=\bigcup_{n\ge 0}\cS_n$ denote the set of bit strings of finite length. Given $n \ge 0$ and $T \ge 0$, we say that (all) bit strings of length $n$ can be \dfn{reconstructed} with probability at least $1-\eps$ from $T$ traces if there is a function\footnote{Alternatively, we can replace $\cS$ by  $\bigcup_{k \le n} \cS_k$ when specifying the domain of $G$.} $G\colon\cS^T\to\{0,1 \}^n$ such that for all $\x\in\cS_n$,
\eqb
\Psubbig_{\x}{G(\mathfrak X ) = \x} \ge 1-\eps.
\label{eq1}
\eqe
If \eqref{eq1} does not hold for any choice of $G$, then we say that more than $T$ traces are required to reconstruct length-$n$ bit strings with probability $1-\eps$.

Given $n \ge 0$, $\>T \ge 0$, and $\x, \y \in \cS_n$, we say that we can \dfn{distinguish} between strings $\x$ and $\y$ with
probability at least $1 - \eps$ from $T$ traces if there is a function $G\colon\cS^T\to\{0,1 \}^n$ such that
\eqb
\begin{split}
	\Psubbig_{\x}{G(\mathfrak X ) = \x} \ge 1-\eps \quad \text{ and } \quad
	\Psubbig_{\y}{G(\mathfrak X) = \y} \ge 1-\eps.
\end{split}
\label{eq2}
\eqe
If \eqref{eq2} does not hold for any choice of $G$, then we say that more than $T$ traces are required to distinguish between $\x$ and $\y$ with probability $1-\eps$.

Recall that $\mu_n$ denotes the uniform probability measure on $\cS_n$, i.e., $\mu_n(\x)=2^{-n}$ for all $\x\in\cS_n$. We say that \dfn{random} bit strings of length $n$ can be reconstructed with probability at least $1-\eps$ from $T$ traces if there is 
a function $G\colon\cS^T\to\{0,1 \}^n$
such that 
\eqb
\sum_{\x\in\cS_n} \Psubbig_{\x}{G(\mathfrak X ) = \x}\cdot \mu_n(\x) \ge 1 - \eps.
\label{eq3}
\eqe
If \eqref{eq3} does not hold for any choice of $G$, then we say that more than $T$ traces are required to reconstruct random length $n$ bit strings with probability $1-\eps$.

Finally, we remark that one can also consider a variant of the problem where the function $G$ may be randomized, 
as explained in Section \ref{sec4}, but this has no significant effect on our results.

\section{Lower bound: Proof of Theorem \ref{prop1}}
\label{sec2}
In this section, we will prove Theorem \ref{prop1}. We begin with a rough (and not entirely accurate) sketch of the proof. We will construct a coupling of the traces from $\x_n$ and $\y_n$ in two steps. The first step of the coupling is similar to what one does for $\x'_n$ and $\y'_n$, whose details can be found in \cite[Corollary 1]{MPV14}: Keep 01-blocks and 10-blocks intact, and for each 01-block decide only whether the block should be fully deleted (i.e., both bits are deleted) or not. Then the only thing we need to track is the numbers of blocks on either side of the defect that are not fully deleted. These are binomial random variables, and thus the total variation distance of the traces is at most that for binomial random variables, which is $\Theta(n^{-1/2})$. In fact, we will need to reserve some randomness, so in the first step, we delete each 01 independently with probability only $q^2/2$ (instead of $q^2$), which does not change the order of magnitude of the total variation distance.

We call the result of the first step a 2-partial trace. This is a string consisting of a sequence of 01-blocks, followed by a 10-block (i.e., the defect), followed by a sequence of 01-blocks. 
Consider the event that the first coupling step did not succeed in making the 2-partial traces the same for $\x_n$ and $\y_n$. On this event, in the second step of the coupling, we increase the success probability of coupling the final traces; this gives a better bound for the total variation distance.
We do this by grouping the retained 01-blocks into 0101-blocks. Each 0101-block undergoes a deletion process that is modified because we are conditioning on the event that each of its constituent 01-blocks was not wholly deleted in the first step. By the triangle inequality, instead of coupling the 2-partial traces to each other, we may couple each to 2-partial traces with no defect. The idea is to find randomly a special 0101-block in the string without defect that becomes the same after deletion as the defect, and at the same time, has the remarkable property that what becomes of the other 0101-blocks is unaffected, so that we can couple the defect to that special 0101-block. If we can achieve that, then we use the remaining randomness to couple the numbers of 0101-blocks that are not wholly deleted in the end (these are again binomial random variables). Using a result of Liggett \cite{Lig00}, we can find that special 0101-block with high probability. Furthermore, how far the special 0101-block is from the center is controlled, which controls how far apart the binomial distributions are and leads to another factor of $O(n^{-1/4})$ in probability of failure to couple exactly. This is the most subtle part of our proof and requires careful attention to several dependencies.

Combining the two coupling stages gives that the total variation distance between the traces is $O(n^{-3/4})$. Knowing the total variation distance is not sufficient to determine the number of traces required for reconstruction (it gives a lower bound of only $\Omega(n^{3/4})$ traces; see Appendix \ref{a1}). However, by throwing away a very small set of 2-partial traces, applying Lemma \ref{prop2}, and using properties of the 2-partial traces, we can upgrade our bound on the total variation distance to show that the squared Hellinger distance between the traces is $O(n^{-5/4}\sqrt{\log n}\,)$. This yields the desired lower bound of $\Omega(n^{5/4}/\sqrt{\log n}\,)$ on the number of required traces. 

The following lemma encapsulates the overall structure in the proof of Theorem \ref{prop1}. The proof of the lemma contains almost all of our work. When we write $\dlch_{\x_n}=\mu_1+\mu_2+\mu_3$ (and the corresponding sum for $\dlch_{\y_n}$), we are adding measures as functions on points; no convolution is involved. Recall the notation $\|\cdot\|_{\ell^\infty(\cdot)}$ from \eqref{eq50}.
\begin{lemma}\label{prop10}
	For all $n \ge 2$, we have $\dlch_{\x_n}=\mu_1+\mu_2+\mu_3$ and $\dlch_{\y_n}=\mu_1+\mu'_2+\mu'_3$, where for some constant $C$ depending only on $q$,
	\eqb 
	\mu_3(\cS) = \mu'_3(\cS)\le n^{-10},
	\label{eq4}
   \vadjust{\kern3pt}
	\eqe 
	\eqb
	\Bigl\|\frac{\mu_2 - \mu'_2}{\mu_1+\mu_2}\Bigr\|_{\ell^\infty(\mu_1+\mu_2)}\leq Cn^{-1/2}\sqrt{\log n},
	\label{eq5}
   \vadjust{\kern3pt}
	\eqe 
	\eqb
	\mu_1(x)+\mu_2(x)=0\quad\Longleftrightarrow\quad \mu_1(x)+\mu'_2(x)=0\qquad\qquad\text{for each } x\in\cS,
	\label{eq7}
	\eqe 
	and
	\eqb
	\dtv(\mu_1+\mu_2, \mu_1+\mu'_2)\le Cn^{-3/4}.
	\label{eq6}
	\eqe 
\end{lemma}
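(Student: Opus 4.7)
The plan is to formalize the two-stage coupling from the proof sketch above. \emph{Stage 1} samples a \emph{2-partial trace} of the form $(01)^L\,10\,(01)^R$ by independently deleting each complete $01$-block with probability $q^2/2$ (keeping the central $10$-defect intact) and reserving enough randomness for a \emph{Stage 2} that completes the deletion channel by applying the correct conditional deletion within each retained block. Under $\Psub_{\x_n}$ one has $(L,R) \sim \Bin(n-1, 1-q^2/2) \otimes \Bin(n, 1-q^2/2)$; under $\Psub_{\y_n}$ the two binomial marginals are swapped.

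First I would define $\mu_3, \mu'_3$ as the contributions to the two laws from an exceptional event $\cE$ consisting of (i) $|L+R-\E[L+R]|>K\sqrt{n\log n}$, or the defect sitting too close to an endpoint of the 2-partial trace, and (ii) failure of the Liggett-type coupling described in the sketch. By Chernoff bounds and the estimates of \cite{Lig00}, each of $\Psub_{\x_n}[\cE]$ and $\Psub_{\y_n}[\cE]$ is at most $\tfrac12 n^{-10}$; a small balancing adjustment ensures $\mu_3(\cS) = \mu'_3(\cS) \le n^{-10}$, giving \eqref{eq4}. The support condition \eqref{eq7} is enforced by construction: I would take $\mu_2, \mu'_2$ as the parts of $\dlch_{\x_n}-\mu_1-\mu_3$ and $\dlch_{\y_n}-\mu_1-\mu'_3$ supported on the common range of Stage-2 outputs of typical 2-partial traces, dumping any residual ``mismatched'' mass into $\mu_3, \mu'_3$ without spoiling \eqref{eq4}.

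Next, for \eqref{eq6}, I would let $\mu_1$ be the part of the joint law on $\cE^c$ under which the coupled traces coincide. Stage 1 alone contributes total variation $O(n^{-1/2})$ via the classical TV bound between $\Bin(n-1,p_1)$ and $\Bin(n,p_1)$ with $p_1 := 1-q^2/2$. Conditional on the 2-partial traces differing, Stage 2 uses Liggett's theorem to identify with high probability a ``special'' $0101$-block in the $\y_n$ 2-partial trace whose modified Stage-2 output reproduces the defect's Stage-2 output while leaving every other block's Stage-2 output unchanged; this inner coupling succeeds with probability $1-O(n^{-1/4})$. Multiplying the two stages yields $\dtv(\mu_1+\mu_2,\mu_1+\mu'_2) \le C n^{-3/4}$, which is \eqref{eq6}.

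The main obstacle will be the pointwise ratio bound \eqref{eq5}, which is what upgrades the total-variation estimate to the eventual Hellinger bound. For any trace $x$ with $(\mu_1+\mu_2)(x)>0$, I would decompose $\mu_2(x)-\mu'_2(x)$ as a signed sum over 2-partial traces whose coupled Stage-2 output is $x$; within each term the $\x_n$-versus-$\y_n$ difference reduces to a ratio of binomial point masses $\Bin(n-1,p_1)/\Bin(n,p_1)$ evaluated at neighboring arguments. A local limit theorem for $\Bin(m,p_1)$ shows that the relative difference of consecutive point masses is $O(|L-\E[L]|/n) = O(\sqrt{\log n/n})$ on $\cE^c$; since this factor is independent of the Stage-2 randomness it passes outside the sum and bounds $(\mu_2-\mu'_2)(x)/(\mu_1+\mu_2)(x)$ pointwise by $C\sqrt{\log n}/\sqrt n$. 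The delicate step will be to arrange the Stage-2 coupling so that every 2-partial trace consistent with a given $x$ inherits precisely the same binomial prefactor, which is what makes the local-limit comparison factor cleanly past the sum.
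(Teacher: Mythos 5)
Your overall plan matches the paper's: a two-stage coupling (binomial-stage plus Liggett-stage), separating an exceptional event, and pushing the binomial point-mass ratio through to get the $\ell^\infty$ bound. But there is a genuine error in how you allocate the Liggett-coupling failure to the three pieces, and a related gap in how \eqref{eq5} survives the second coupling stage.

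The error: you place ``failure of the Liggett-type coupling'' into the exceptional event $\cE$, hence into $\mu_3,\mu'_3$. That event has probability $\Theta(n^{-1/4})$ (or, if you first intersect with the event that the binomial coupling failed, $\Theta(n^{-3/4})$) --- far larger than $n^{-10}$. No ``balancing adjustment'' can shrink it to meet \eqref{eq4}. In the paper's argument, $\mu_3,\mu'_3$ contain \emph{only} the binomial tail events (which one can make $\le n^{-10}$ by choosing the tail cutoff); the Liggett failure mass is absorbed into $\mu_2,\mu'_2$, which is consistent because $\mu_2(\cS)=\mu'_2(\cS)=O(n^{-3/4})$ is exactly the bound \eqref{eq6} demands.

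The related gap: once you move the Liggett failure into $\mu_2,\mu'_2$, you must check that the pointwise ratio bound \eqref{eq5} survives. The $\ell^\infty$ bound is derived after Stage 1 alone, via a pushforward lemma applied to the deterministic/identically-randomized map from 4-partial traces to traces; call those measures $\wt\mu_1,\wt\mu_2,\wt\mu'_2$. The Liggett stage then produces a common piece $\ol\mu_2 \le \wt\mu_2 \wedge \wt\mu'_2$ and one sets $\mu_1:=\wt\mu_1+\ol\mu_2$, $\mu_2:=\wt\mu_2-\ol\mu_2$, $\mu'_2:=\wt\mu'_2-\ol\mu_2$. The point that makes everything work is the algebraic identity $\mu_1+\mu_2=\wt\mu_1+\wt\mu_2$ and $\mu_2-\mu'_2=\wt\mu_2-\wt\mu'_2$, so \eqref{eq5} for the new decomposition is \emph{literally the same inequality} as for the pre-Liggett one. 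Your proposal never isolates this identity, and because your Stage-2 Liggett coupling is a genuine coupling (not the same randomized map applied to both laws), the ``factor cleanly past the sum'' argument does not apply to it; you must first prove \eqref{eq5} for the Stage-1 decomposition and only then show the Stage-2 refinement leaves both the numerator $\mu_2-\mu'_2$ and the denominator $\mu_1+\mu_2$ unchanged.
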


Note that \eqref{eq6} and \eqref{eq4} imply the upper bound in Proposition \ref{TVbounds}.
Before proving Lemma \ref{prop10}, we will deduce Theorem \ref{prop1} from the lemma, and we will state and prove Lemmas \ref{prop12} and \ref{prop11}, which we use in the proof of Lemma \ref{prop10}.

\begin{proof}[Proof of Theorem \ref{prop1}]
	By \rref l.Hel-sum/, \eqref{e.Hel-small}, and \eqref{eq4}, 
	\eqbn 
	\begin{split}
		\dHes( \dlch_{\x_n},\dlch_{\y_n} )
		&\leq
		\dHes(\mu_1+\mu_2, \mu_1+\mu'_2)
		+\dHes(\mu_3, \mu'_3)\\
		&\leq \dHes(\mu_1+\mu_2, \mu_1+\mu'_2)
		+ 2\,n^{-10}.
	\end{split}
	\eqen
	Let $\nu:=\mu_1+\mu_2$ and $\mu:=\mu_1+\mu'_2$.	By Lemma \ref{prop2}, \eqref{eq7}, \eqref{eq5}, and \eqref{eq6}, we get
	\eqbn \label{eq11}
	\begin{split}
		\dHes(\mu, \nu)
		&\le
		\mu\{x \st \nu(x) = 0\} +
		2 \cdot \Bigl\|\frac{\mu(x) - \nu(x)}{\nu(x)}\Bigr\|_{\ell^\infty(\nu)}
		\cdot
		\dtv(\mu, \nu)\\
		&\le 0+2Cn^{-1/2}\sqrt{\log n}\cdot  Cn^{-3/4}\\
		&= 2C^2 n^{-5/4}\sqrt{\log n}.
	\end{split}
	\eqen
	Applying \rref l.powerTV-Hel/, we obtain the theorem.
\end{proof}

Write $\Bin(n, s)$ for the binomial distribution corresponding to $n$ trials with success probability $s$ in each trial.
We record the following routine calculations for later use.

\begin{lemma} 
	For $n \ge 1$ and $s\in(0,1)$, let $X\sim \Bin(n, s)$ and $Y\sim \Bin(n-1, s)$. Then
	\rlabel e.bin-ratio
	{
		\bigl|\P[ X=k ]-\P[Y=k]\bigr|
		=
		\frac{|ns-k|}{n(1-s)}\cdot \P[ X=k ] \quad\text{for }k=0,\dots,n
		,
	}
\rlabel e.bin-tail
{
	\Pbig{|X - ns| > c \sqrt{n \log n}\,}
   \le
   2\,n^{-2c^2} \quad\text{for } c > 0
	,
}
and
\rlabel e.bin-TV
{
	\dtv\bigl(X, Y\bigr)
	\le
	\sqrt{\frac{s}{4n(1-s)}}.
} 
\label{prop12}
\end{lemma}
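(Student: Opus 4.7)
All three parts follow from elementary manipulations of binomial pmfs and standard concentration.

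For \eqref{e.bin-ratio}, I would compute directly from $\P[X=k]=\binom{n}{k}s^k(1-s)^{n-k}$ and $\P[Y=k]=\binom{n-1}{k}s^k(1-s)^{n-1-k}$. The ratio $\P[Y=k]/\P[X=k]$ simplifies to $(n-k)/\bigl(n(1-s)\bigr)$, valid for $0\le k\le n-1$ (and the identity also holds at $k=n$ since $\P[Y=n]=0$). Subtracting gives
\eqbn
\P[X=k]-\P[Y=k]=\P[X=k]\cdot\frac{n(1-s)-(n-k)}{n(1-s)}=\P[X=k]\cdot\frac{k-ns}{n(1-s)},
\eqen
and taking absolute values yields the claim.

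For \eqref{e.bin-tail}, I would simply invoke Hoeffding's inequality for a sum of $n$ independent $[0,1]$-valued variables, which gives $\Psub{|X-ns|>t}\le 2e^{-2t^2/n}$. Setting $t=c\sqrt{n\log n}$ produces the bound $2n^{-2c^2}$.

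For \eqref{e.bin-TV}, I would expand the total variation distance using part \eqref{e.bin-ratio}:
\eqbn
\dtv(X,Y)=\tfrac12\sum_{k}\bigl|\P[X=k]-\P[Y=k]\bigr|=\frac{\E|X-ns|}{2n(1-s)}.
\eqen
By Jensen's inequality, $\E|X-ns|\le\sqrt{\Var(X)}=\sqrt{ns(1-s)}$, and substituting gives $\dtv(X,Y)\le\tfrac12\sqrt{s/\bigl(n(1-s)\bigr)}=\sqrt{s/\bigl(4n(1-s)\bigr)}$.

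None of the steps present a real obstacle; the only minor care is checking that the identity in \eqref{e.bin-ratio} is valid at the boundary $k=n$ (where $Y$ puts no mass), so that the sum defining $\dtv$ in the proof of \eqref{e.bin-TV} is correctly accounted for.
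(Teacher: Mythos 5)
Your proof is correct and follows essentially the same approach as the paper: direct computation of the ratio of pmfs for \eqref{e.bin-ratio}, Hoeffding for \eqref{e.bin-tail}, and the expectation/Jensen argument $\dtv = \E|X-ns|/(2n(1-s)) \le \sqrt{\Var X}/(2n(1-s))$ for \eqref{e.bin-TV}. Your added remark about the boundary case $k=n$ is a sensible sanity check but does not change the argument.
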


\begin{proof} 
	The equation \eqref{e.bin-ratio} follows by direct calculation:
	\eqb \label{bin-diff}
	\begin{split}
		\P[ X=k ]-\P[Y=k]
		=
		\P[ X=k ]\cdot
		\Bigl( 1 - \frac{n-k}{n(1-s)}  \Bigr)
		=
		\P[ X=k ]\cdot
		\frac{k - ns}{n(1-s)}.
	\end{split}
	\eqe
	The estimate \eqref{e.bin-tail} is immediate by the inequality of Hoeffding--Azuma. 
   We obtain \eqref{e.bin-TV} from \eqref{e.bin-ratio}:
	\eqbn
	\begin{split}
		\dtv(X,Y) &= \frac12 \sum_{k=0}^{n} \bigl|\P[X=k]-\P[Y=k]\bigr|
		=\frac12 \sum_{k=0}^{n} \frac{|ns-k|}{n(1-s)} \cdot \P[ X=k ]\\
		&= \frac{1}{2n(1-s)} \Ebig{|ns - X|} 
		\le \frac{\Var(X)^{1/2}}{2n(1-s)} = \sqrt{\frac{s}{4n(1-s)}}. \qedhere
	\end{split}
	\eqen
\end{proof}

The upcoming Lemma \ref{prop11} will allow us to estimate the total variation distance between traces produced from a pair of strings with and without, respectively, a defect. 
A key role in its proof is played by the following theorem of Liggett \cite{Lig00} that concerns Bernoulli processes. Part \eqref{item-2iii} of this theorem is not stated explicitly by Liggett, but follows from the proof of \cite[Proposition 2.2 and Theorem 4.25]{Lig00}.
\begin{theorem}[\cite{Lig00}]
	Let $s\in(0,1)$, and let $(a_j)_{j\in\Z}$ be a bi-infinite sequence of i.i.d.\ Bernoulli$(s)$ random variables. Then there is a random variable $X$ supported on $\N_0:=\{0,1,\dots\}$ such that the following hold.
	\begin{enumerate}[(i)]
		\item The shifted string $(b_j)_{j\in\Z}$ for $b_j:=a_{j-X}$ consists of i.i.d.\ Bernoulli$(s)$ random variables, except that $b_{0}=1$ almost surely.
		\item For a constant $C$ depending only on $s$ and for all $m\in\N$, $\>\P[X>m]\leq Cm^{-1/2}$.
		\item\label{item-2iii} Conditional on $X$ and the bits $(a_j)_{j\in\{ -X,\dots,0 \}}$, all the bits $a_j$ for $j\not\in\{ -X,\dots,0 \}$ are i.i.d.\ Bernoulli$(s)$ random variables.
	\end{enumerate}
\label{thm:lig}
\end{theorem}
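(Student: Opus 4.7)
The plan is to construct $X$ as a shift that realizes the Palm distribution of the Bernoulli process. Since $(a_j)_{j \in \Z}$ is i.i.d., the Palm distribution of its $1$-process at the origin is simply the product Bernoulli$(s)$ measure conditioned on $a_0 = 1$, so it has a $1$ at $0$ and the other coordinates still i.i.d.\ Bernoulli$(s)$; hence (i) is equivalent to exhibiting a shift coupling from the stationary measure to this Palm law.

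First, I would arrange for $X$ to be a measurable function of $(a_{-k})_{k \ge 0}$ together with an independent Uniform$(0,1)$ variable $U$, with the selection rule designed so that $a_{-X}=1$ holds by construction. This choice makes (iii) essentially automatic: given $X$ and $(a_j)_{-X \le j \le 0}$, the bits $a_j$ for $j > 0$ or $j < -X$ were not consulted at all when defining $X$, so by the independence of the original sequence they remain i.i.d.\ Bernoulli$(s)$ conditionally on the information revealed. Second, I would realize $X$ as a stopping time of a mean-zero random walk derived from $(a_{-k})_{k\ge 0}$, for instance $S_k := \sum_{j=0}^{k-1}(a_{-j} - s)$. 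The increments $a_{-k}-s$ equal $1-s$ on a $1$ and $-s$ on a $0$, so an up-step of the walk is precisely an occurrence of a $1$; choosing $X$ to be the first crossing time of a level selected via $U$ therefore guarantees $a_{-X}=1$. Third, I would verify (i) by a direct finite-dimensional computation matching the law of $(a_{j-X})_{j\ne 0}$ with the i.i.d.\ Bernoulli$(s)$ law, and (ii) by invoking the classical fact that first-passage times of centered, finite-variance one-dimensional random walks have tails of order $m^{-1/2}$ (via Sparre Andersen or the local central limit theorem).

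The most delicate step is pinning down the precise selection rule so that the resulting $X$ \emph{exactly} reproduces the Palm distribution while retaining the $m^{-1/2}$ tail. This is the heart of Liggett's argument: it rests on a fluctuation-theoretic identity that presents the Palm law as the law of the walk sampled at a size-biased ladder time, and the use of $U$ must be calibrated against this identity. Once the correct stopping rule is in hand, conclusion (iii) follows immediately from the measurability arrangement, and conclusion (i) follows from the Palm identification combined with the independence of the unused bits.
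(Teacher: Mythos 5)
The paper does not prove this theorem at all; it is taken directly from Liggett's paper, with the one remark that part (iii), although not stated explicitly there, can be read off from the proofs of Liggett's Proposition~2.2 and Theorem~4.25. So there is no in-paper argument to compare against, only a citation.

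Your sketch is, in spirit, a reconstruction of Liggett's own strategy: Palm calculus to reduce (i) to a shift-coupling statement, a stopping rule built from the centered walk $S_k = \sum_{j=0}^{k-1}(a_{-j}-s)$ together with one auxiliary uniform $U$, and a fluctuation-theoretic (Sparre Andersen / local CLT) estimate for the tail in (ii). You also correctly identify why (iii) is automatic once $X$ is a stopping time of the past filtration enriched by $U$: the bits at positions $j>0$ and $j<-X$ are never examined, so conditionally they remain i.i.d.\ Bernoulli$(s)$.

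The problem is that the one genuinely nontrivial step is left unexecuted, and you say so yourself. The naive choice $X := \min\{k \ge 0 : a_{-k}=1\}$ fails, since whenever $X>0$ it forces $b_1 = \dots = b_X = 0$, so the shifted string is visibly not i.i.d.\ Bernoulli$(s)$; the existence of \emph{any} stopping rule for which (i) holds exactly, and the fluctuation identity calibrating $U$ against the ladder structure of $S$, is precisely the content of the theorem and is what you call ``the heart of Liggett's argument'' and then defer. Without it, (i) is unproven. Moreover (ii) does not simply fall out of ``first-passage times of centered finite-variance walks have $m^{-1/2}$ tails'': the eventual $X$ is a randomized mixture over ladder epochs, not a single first-passage time, so the $m^{-1/2}$ bound has to be verified for the mixture after the selection rule is pinned down. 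As written, this is a plausible and well-informed plan in the right direction, but not a proof.
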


Note that one \emph{cannot} choose $X$ so that $(a_j)_{j \ne -X}$ is a Bernoulli($s$) process conditioned on $X$, because that would lead to the contradiction
\[
\Ebigg{\Ebigg{\sum_{j \le 0} a_j 2^j \biggm| X}}
>
\Ebigg{\sum_{j \le 0} a_j 2^j }.
\]

We will consider strings on the alphabet $\{\sy0,\sy1,\sy2\}$. The $\sy1$s will represent 0101-blocks that become the same as the defect becomes; the $\sy0$s will represent 0101-blocks that are wholly deleted, and the $\sy2$s will represent the rest. For a string $\w=(w_1,\dots,w_n)\in\{\sy0,\sy1,\sy2 \}^n$, let $R(\w)$ denote the string obtained by deleting the $\sy0$s and then contracting the string. In other words, $R(\w)$ is obtained by repeating the following procedure for $k=1,\dots,n$, starting with an empty string:
\begin{itemize}
	\item If $w_k\in\{ \sy1,\sy2\}$, copy $w_k$ to the end of $R(\w)$ and increase $k$ by one.
	\item If $w_k=\sy0$, only increase $k$ by one.
\end{itemize}

\begin{lemma}
	\label{prop11}
	Let $C_0>1$ and $n\in\N$, and let $j_\ell,j_r\in\N$ satisfy $C_0^{-1}n< j_\ell,j_r<C_0n$. Let $\p := (p_\sy0, p_\sy1, p_\sy2) \in (0, 1)^3$ be a probability vector on the triple $(\sy0, \sy1, \sy2)$. Let $\w=(w_{-j_\ell},\dots,w_{j_r})\in\{\sy0,\sy1,\sy2 \}^{j_\ell+1+j_r}$ and 
	$\w''=(w''_{-j_\ell},\dots,w''_{j_r})\in\{\sy0,\sy1,\sy2 \}^{j_\ell+1+j_r}$ be strings of length $j_\ell+1+j_r$ on the alphabet $\{\sy0,\sy1,\sy2 \}$ such that the letters $w_i$ and $w''_i$ are i.i.d.\ with law $\p$:
	\eqb
   \w \sim \p^{j_\ell + 1 + j_r} \quad\text{ and }\quad
   \w'' \sim \p^{j_\ell + 1 + j_r} .
	\label{eq16}
	\eqe
	Condition on the event that $w_{0}=\sy1$. 
	
	Then there is a constant $C_1$ depending only on $C_0$ and $\p$ such that the total variation distance between $R(\w)$ and $R(\w'')$ is bounded above by $C_1 n^{-1/4}$.
\end{lemma}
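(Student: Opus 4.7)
I would control $\dtv(R(\w), R(\w''))$ by first conditioning on the output lengths $N := |R(\w)|$ and $N'' := |R(\w'')|$. Since a letter contributes to $R$ iff it is non-$\sy0$, and since $w_0 = \sy1$ is forced in $\w$ while $w''_0$ is unconditioned, $N \sim 1 + \Bin(j_\ell + j_r,\ p_\sy1 + p_\sy2)$ while $N'' \sim \Bin(j_\ell + 1 + j_r,\ p_\sy1 + p_\sy2)$; by \eqref{e.bin-TV} and the triangle inequality, $\dtv(N, N'') = O(n^{-1/2})$. The standard decomposition
\[
\dtv(R(\w), R(\w'')) \le \dtv(N, N'') + \E_{L \sim N''}\, \dtv\bigl(R(\w) \mid N = L,\ R(\w'') \mid N'' = L\bigr)
\]
reduces the task to bounding the conditional TV distance by $O(n^{-1/4})$ for typical $L = \Theta(n)$; atypical $L$ have probability $O(n^{-\mathrm{const}})$ by \eqref{e.bin-tail} and contribute negligibly.

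\textbf{Likelihood ratio and hypergeometric $K$.} Let $\p'$ denote $\p$ restricted to $\{\sy1, \sy2\}$ and renormalized, and write $p'_\sy1 := \p'(\sy1)$. Conditional on $N'' = L$, $R(\w'')$ is i.i.d.\ $\p'$ on $\{\sy1, \sy2\}^L$. Conditional on $N = L$, $R(\w)$ is i.i.d.\ $\p'$ except at a random rank $K \in \{1, \ldots, L\}$ (the rank of position $0$ among the non-$\sy0$ positions of $\w$), where the value is forced to be $\sy1$. A direct summation over $K$ yields, for any $y \in \{\sy1, \sy2\}^L$,
\[
\frac{\P[R(\w) = y \mid N = L]}{\P[R(\w'') = y \mid N'' = L]} \;=\; \frac{1}{p'_\sy1} \sum_{k \,:\, y_k = \sy1} \pi_k, \qquad \pi_k := \P[K = k \mid N = L].
\]
Given $N = L$, the variable $K - 1$ equals the size of the intersection of a uniformly random size-$(L-1)$ subset of $\{-j_\ell, \ldots, -1, 1, \ldots, j_r\}$ with $\{-j_\ell, \ldots, -1\}$, so $K - 1$ is hypergeometric with variance $\Theta(n)$, and hence $\max_k \pi_k = O(n^{-1/2})$.

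\textbf{Cauchy--Schwarz and the main obstacle.} Set $F(S) := \sum_{k \in S} \pi_k$ and $S_y := \{k : y_k = \sy1\}$. The conditional TV distance equals $\frac{1}{2 p'_\sy1} \E_y |F(S_y) - p'_\sy1|$ with $y \sim R(\w'') \mid N'' = L$. Since the indicators $\1[y_k = \sy1]$ are i.i.d.\ Bernoulli$(p'_\sy1)$, $F(S_y)$ has mean $p'_\sy1$ and variance $p'_\sy1 (1 - p'_\sy1) \sum_k \pi_k^2$, and Cauchy--Schwarz gives $\E_y |F(S_y) - p'_\sy1| \le \sqrt{p'_\sy1 (1 - p'_\sy1) \sum_k \pi_k^2}$. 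Using $\sum_k \pi_k^2 \le \max_k \pi_k = O(n^{-1/2})$, the conditional TV distance is $O(n^{-1/4})$, which combined with the length bound yields $\dtv(R(\w), R(\w'')) = O(n^{-1/4})$. The main obstacle is the Cauchy--Schwarz step: a naive letter-by-letter coupling of $R(\w)$ and $R(\w'')$ gives only a $\Theta(1)$ bound, so the square-root improvement requires exploiting the $\Theta(\sqrt n)$-spread of the hypergeometric $K$. Liggett's Theorem~\ref{thm:lig} provides an alternative coupling-based viewpoint in which one identifies a ``designated'' $\sy1$ position in a bi-infinite extension of $R(\w'')$ whose complement remains i.i.d., playing the role of the forced $\sy1$ of $R(\w)$; Liggett's tail bound $\P[X > m] \le C m^{-1/2}$ combined with a binomial TV calculation then produces the same $O(n^{-1/4})$ factor.
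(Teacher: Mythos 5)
Your proposal is correct but takes a genuinely different route from the paper. The paper extends $\w$ and $\w''$ to bi-infinite i.i.d.\ strings and invokes Liggett's theorem (Theorem~\ref{thm:lig}) to find a random shift $X$ with tail $\P[X>m]\le Cm^{-1/2}$ under which $\w''$ acquires a forced $\sy1$ at a designated position; it then couples the central blocks to match exactly, and couples the binomial counts of non-$\sy0$ letters on either side, accounting for the offset $X$ via \eqref{e.bin-TV}. The two sources of failure --- $\P[X\ge\sqrt n]=O(n^{-1/4})$ and $\E[C X/\sqrt n\,;\,X<\sqrt n\,]=O(n^{-1/4})$ --- are exactly the $n^{-1/4}$. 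Your argument instead conditions on the output lengths, writes the exact likelihood ratio $\frac{1}{p'_\sy1}\sum_{k:y_k=\sy1}\pi_k$ in terms of the hypergeometric law $\pi$ of the rank $K$ of the forced letter, and bounds $\E_y|F(S_y)-p'_\sy1|$ by a second-moment (Cauchy--Schwarz) estimate, converting $\sum_k\pi_k^2\le\max_k\pi_k=O(n^{-1/2})$ into the $O(n^{-1/4})$ bound. This avoids Liggett entirely and is, in spirit, a likelihood-ratio/local-limit argument rather than a coupling. What you gain is a shorter, more elementary proof that makes the $\sqrt n$-spread visible directly in the hypergeometric law of $K$; what the paper's Liggett-coupling gains is an explicit coupling of traces (useful for readers' intuition about where the two laws agree) at the price of a heavier external tool. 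The one point you state without justification is that a hypergeometric with variance $\Theta(n)$ has $\max_k\pi_k=O(n^{-1/2})$: this is true (hypergeometric is log-concave, and any log-concave probability sequence on $\Z$ with variance $\sigma^2$ has sup-pmf $O(1/\sigma)$; alternatively a local CLT for the hypergeometric gives it directly), but it is not immediate from ``variance $\Theta(n)$'' alone --- unimodality without log-concavity would not suffice --- so this step deserves a one-line citation or proof if you write it up. Similarly, the handling of atypical output lengths $L$ should invoke \eqref{e.bin-tail} with a deviation of order $\sqrt{n\log n}$ to get probability $o(n^{-1/4})$, and should note that for $L$ in that typical window both $L-1$ and $j_\ell+j_r-L+1$ stay of order $n$, keeping the hypergeometric variance $\Theta(n)$.
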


\begin{proof} 
	Throughout the proof all constants may depend on $(C_0,p_\sy0,p_\sy1,p_\sy2)$.
	
	It will be more convenient in the proof to work with bi-infinite strings. Therefore we assume throughout the proof that $\w$ and $\w''$ are bi-infinite strings 
	$\w=(\dots,w_{-1},w_0,w_1,\dots)$ 
	and $\w''=(\dots,w''_{-1},w''_0,w''_1,\dots)$ with law $\p^{\Z}$ conditioned on the event that $w_{0}=\sy1$. We will show that the total variation distance between
	$R\big( (w_{-j_\ell},\dots,w_{j_r}) \big)$
	and
	$R\big( (w''_{-j_\ell},\dots,w''_{j_r}) \big)$
	is bounded above by $C_1 n^{-1/4}$.
	
	By the result of Liggett stated in Theorem \ref{thm:lig} above, we can find a random variable $X$ supported on $\N_0$ and independent of $\w$ and a constant $C_2>0$ (depending on $p_\sy1$) such that $\P[X\ge m]\leq C_2 (m+1)^{-1/2}$ for all $m\in\N$ and such that 
	\eqb
	(w''_{j-X})_{j\in\Z}\eqD\w.
	\label{eq17}
	\eqe
	Furthermore, by Theorem \ref{thm:lig}\eqref{item-2iii} we may define $X$ so that conditioned on $X$ and the string $(w''_{-X},\dots,w''_0)$, all letters except $w''_{-X},\dots,w''_0$ are independent with law $\p$.

\begin{figure}[ht]
	\centering
	\includegraphics[scale=1]{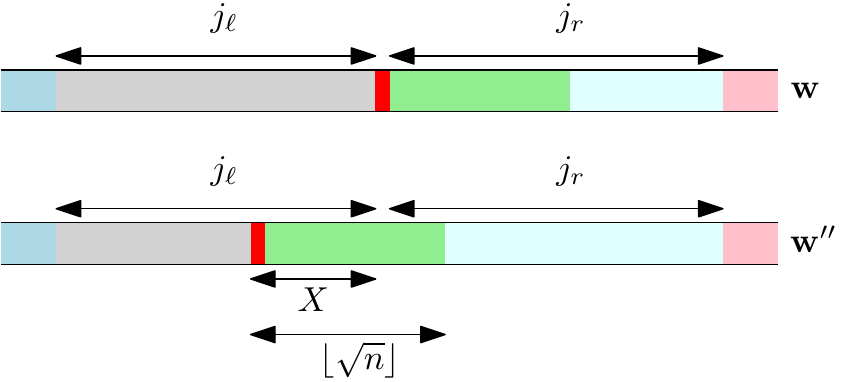}
	\caption{The figure illustrates $X$, $\w=\w_0\w_1\w_2\w_3\w_4$, and $\w''=\w''_0\w''_1\w''_2\w''_3\w''_4$ on the event $B$. The string $\w_0$ (resp., $\w_1,\w_2,\w_3,\w_4$) is shown in blue (resp., gray, green \& red, cyan, and pink), and the color code for $\w''_0,\w''_1,\w''_2,\w''_3,\w''_4$ is similar. The locations of letters known equal to $\beta$ are shown in red.   }
	\label{fig1}
\end{figure}

	Let $B:=\bigl[X<\flr{\sqrt{n}} \bigr]$, so that $\P[B\compl]\leq C_2n^{-1/4}$. On the event $B$, write the strings $\w$ and $\w''$ as concatenations of five strings each:
	\eqbn
	\w=\w_0\w_1 \w_2 \w_3\w_4\quad \text{ and } \quad
	\w''=\w''_0 \w''_1 \w''_2 \w''_3 \w''_4,
	\eqen
	where 
	\eqbn
	\begin{split}
		&\w_0  = (\dots ,w_{-j_\ell-1}),\quad
		\w_1  = (w_{-j_\ell},\dots, w_{-1}),\quad 
		\w_2 = (w_0,\dots, w_{\flr{\sqrt{n}}-1}),\\
		&\w_3 = (w_{\flr{\sqrt{n}}},\dots, w_{j_r}),\quad
		\w_4 = (w_{j_r+1},\dots),\\
		&\w''_0  = (\dots ,w''_{-j_\ell-1}),\quad
		\w''_1= (w''_{-j_\ell},\dots, w''_{-X-1}),\quad
		\w''_2 = (w''_{-X},\dots, w''_{-X+\flr{\sqrt{n}}-1}),\\
		&\w''_3 = (w''_{-X+\flr{\sqrt{n}}},\dots, w''_{j_r}),\quad
		\w''_4 = (w''_{j_r+1},\dots).
	\end{split}
	\eqen
	See Figure \ref{fig1} for an illustration. On the event $B\compl$, split the strings $\w$ and $\w''$ in the exact same way, except that 
	$\w''_0  = (\dots ,w''_{-X-1})$ and that
	$\w''_1$ is the empty string.
   By Theorem \ref{thm:lig}\eqref{item-2iii},
	\eqb
	\begin{split}
		&\text{conditional on $X$ and $\w''_2$ and on the event $B$, the letters of the strings }\\
		&\text{$\w''_0,\w''_1,\w''_3,\w''_4$ are i.i.d.\ with law $\p$.}
	\end{split}
	\label{eq:lig}
	\eqe

	Let $Y_\ell$ and $Y''_\ell$ denote the number of letters of $\w_1$ and $\w''_1$, respectively, that are \emph{not} deleted, i.e.,
	\eqbn
	Y_\ell:=\# \bigl\{ j \in\{-j_\ell,\dots,-1 \}  \st  w_j\neq \sy0 \bigr\},\qquad
	Y''_\ell:=\# \bigl\{ j \in\{-j_\ell,\dots,-X-1 \}  \st  w''_j\neq \sy0 \bigr\}.
	\eqen
	Define $Y_r$ and $Y''_r$ similarly for $\w_3$ and $\w''_3$, i.e.,
	\eqbn
	Y_r:=\# \bigl\{ j \in\{\flr{\sqrt{n}},\dots,j_r \}  \st  w_j\neq \sy0 \bigr\},\qquad
	Y''_r:=\# \bigl\{ j \in\{-X+\flr{\sqrt{n}},\dots,j_r \}  \st  w''_j\neq \sy0 \bigr\}.
	\eqen
	For any given coupling of the strings $\w$ and $\w''$, define the event $B':=\bigl[ (Y_\ell,Y_r)=(Y''_\ell,Y''_r) \bigr]$.
	
	We now define a coupling of the two strings $\w$ and $\w''$ by sampling $\w$ and $\w''$ stepwise on the same probability space as follows. Roughly, we first sample the ``central" strings $\w_2$ and $\w''_2$ so that they match, without specifying $X$. Then we sample $X$. Then, in case $B$ occurs, we sample the binomial random variables $Y_\ell$, $Y''_\ell$, $Y_r$, and $Y''_r$ so that $B'$ has as high probability as possible. Finally, we sample the rest of the information in the strings $\w$ and $\w''$. To be precise:
	\begin{enumerate}[(i)]
		\item\label{item-i} Sample $\w_2$ and $\w''_2$ such that $\w_2=\w''_2$ and the marginal law of each string is $\p^{\flr{\sqrt n}}$. This is possible by \eqref{eq17}. 
		\item\label{item-ii} Sample $X$ conditioned on $\w_2$ and $\w''_2$. (We have not described explicitly this conditional distribution; also, note that $X$ is not bounded.)
		\item\label{remaining} Sample $Y_\ell$, $Y''_\ell$, $Y_r$, and $Y''_r$ conditioned on $\w_2$, $\w''_2$, and $X$ with a special joint distribution: 
      First, $Y_\ell$ and $Y_r$ are independent, as are $Y''_\ell$ and $Y''_r$. Second,
		by \eqref{eq:lig}, conditioned on $\w_2$, $\w''_2$, and $X$, and on the event $B$, the random variables $Y''_\ell$ and $Y''_r$ are binomial random variables 
		$Y''_\ell\sim \op{Bin}(j_\ell-X,  1 - p_\sy0)$ and 
		$Y''_r   \sim \op{Bin}(j_r+X-\flr{\sqrt{n}}+1,1 - p_\sy0)$. We couple $Y_\ell$, $Y''_\ell$, $Y_r$, and $Y''_r$ such that except on an event of conditional probability $\dtv\bigl( (Y_\ell,Y_r), (Y''_\ell,Y''_r) \bigr)$ (where we consider the total variation distance conditional on $\w_2$, $\w''_2$, $X$, and $B$), the event $B'$ occurs. Third, on the event $B\compl$, we take the independent coupling of $Y_\ell$, $Y''_\ell$, $Y_r$, and $Y''_r$ conditioned on $\w_2$, $\w''_2$, and $X$.
		\item\label{item-iv} Sample the remaining randomness conditioned on $\w_2$, $\w''_2$, $X$, $Y_\ell$, $Y''_\ell$, $Y_r$, and $Y''_r$: On the event $B'\cap B$, by \eqref{eq:lig} we may couple $\w$ and $\w''$ so that $R(\w_1)=R(\w''_1)$ and $R(\w_3)=R(\w''_3)$.
	\end{enumerate}
	By \eqref{item-i} and \eqref{item-iv} of this coupling, we see that on the event $B\cap B'$,
	\eqbn
	R\big( (w_{-j_\ell},\dots,w_{j_r}) \big)
	=
	R(\w_1\w_2\w_3) = R(\w''_1\w''_2\w''_3)
	= R\big( (w''_{-j_\ell},\dots,w''_{j_r}) \big).
	\eqen
	To conclude the proof, it is therefore sufficient to show that $\P[B\cap B' ]\geq 1-C_1 n^{-1/4}$ for some constant $C_1$. 
	
By \eqref{e.bin-TV}, \eqref{eq34} (with $n = 2$ there), \eqref{remaining} of the coupling, and the fact that 
$Y_\ell\sim \op{Bin}(j_\ell, 1 - p_\sy0)$ and 
$Y_r\sim\op{Bin}(j_r-\flr{\sqrt{n}}+1, 1 - p_\sy0)$, the total variation distance between $(Y_\ell,Y_r)$ and $(Y''_\ell,Y''_r)$ conditional on $X$ and on the event $B$ is at most $C_3 X/\sqrt{n}$ for some constant $C_3$ that depends on $C_0$ and $p_\sy0$. Summing over the possible values of $X$, we get the following for some constant $C_4>0$:
	\eqbn
	\begin{split}
		\P[ B\cap (B')\compl ]
		&\leq
		C_3\E[\1_{B} X/\sqrt{n}\,] 
		= 
		C_3 n^{-1/2} \sum_{k=0}^{\flr{n^{1/2}} - 1} \P[X>k] \\
		&\leq  C_4n^{-1/2}\cdot (n^{1/2})^{1/2} = C_4n^{-1/4}.
	\end{split}
	\eqen 
	Combining this with $\P[ B\compl ]\leq C_2n^{-1/4}$, we obtain $\P[B\cap B' ]\geq 1-C_1 n^{-1/4}$ for some constant $C_1$, which concludes the proof.
\end{proof}

\begin{proof}[Proof of Lemma \ref{prop10}] 
   We will always couple the deletions made to the defects so that they are the same. If both defects are wholly deleted, then the remaining strings obviously can be coupled to have the exact same traces; this occurs with probability $q^4$ and forms part of the measure $\mu_1$ that we need to define. It will be most convenient from now on to \emph{condition on the event that neither defect is wholly deleted}.

	\begin{figure}[ht]
		\centering
		\fbox{\includegraphics[scale=1]{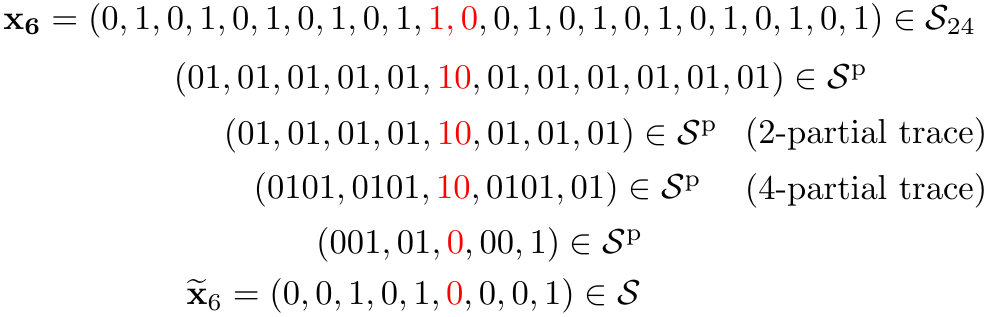}}
		\caption{The figure illustrates the steps (I)--(III) for constructing a trace as described in the proof of Lemma \ref{prop10}. The steps from the second line to the third line and from the fourth line to the fifth line are random, while the other steps are deterministic. In the displayed example, we have $Y_\ell=4$ and $Y_r=3$. The defect is colored in red to simplify the reading of the figure, but, of course, is not part of the information in the actual trace.}
		\label{partialtrace}
	\end{figure}

	A trace may be constructed in three steps (see Figure \ref{partialtrace} for an illustration):
	\begin{enumerate}[(I)]
		\item First we construct the 2-\dfn{partial trace}. A 01-\dfn{block} (resp., 10-\dfn{block}) is the string of length two given by $(0,1)$ (resp., $(1,0)$). The input string $\x_n$ may be viewed as the concatenation of $n-1$ 01-blocks, followed by a single 10-block and then $n$ 01-blocks. We sample the 2-partial trace of $\x_n$ by setting $s:=1-q^2/2$, letting $Y_\ell\sim \Bin(n-1,s)$ and $Y_r\sim\Bin(n,s)$ be independent binomial random variables, and defining the partial trace to be the concatenation of $Y_\ell$ 01-blocks, followed by a single 10-block, and then $Y_r$ 01-blocks. The partial trace of $\y_n$ is defined in the exact same way, except that $Y_\ell\sim \Bin(n,s)$ and $Y_r\sim\Bin(n-1,s)$.  
		\item Given a 2-partial trace, we define the 4-\dfn{partial trace} by the following deterministic procedure. Defining a 0101-\dfn{block} to be the length-4 string $(0,1,0,1)$, the 4-partial trace associated with the 2-partial trace in (I) is the concatenation of the following blocks in the listed order:
		\begin{itemize}
      \renewcommand\labelitemii{$\bullet$}
			\item if $Y_\ell$ is odd, a 01-block,
			\item $\lfloor Y_\ell/2\rfloor$ 0101-blocks,
			\item one 10-block (the defect),
			\item $\lfloor Y_r/2\rfloor$ 0101-blocks,
			\item if $Y_r$ is odd, a 01-block.
		\end{itemize}
		\item\label{III} From the 4-partial trace, we construct the final traces $\xt_n$ and $\yt_n$ as follows, where we treat each block independently and obtain a string in $\cS$ by concatenating the bits of the various blocks in the same order as they appear in the 4-partial trace.
		\begin{itemize}
      \renewcommand\labelitemii{$\bullet$}
			\item A 01-block is replaced by 01,1,0,$\varnothing$ with probability $p^2/s$, $pq/s$, $pq/s$, $q^2/(2s)$, respectively, where $\varnothing$ denotes the trivial (length zero) string. 
			\item A 0101-block is first replaced by two 01-blocks, and then each 01-block is treated independently as in the preceding bullet point. The result is a block in the set 
			$$
			\cB_{0101} :=\{ 0101, 101, 011, 001, 010, 01, 10,11,00,0,1,\varnothing\}.
			$$
			\item The 10-block representing the defect is replaced by 10,1,0 with probability $p^2/(1-q^2)$, $pq/(1-q^2)$, $pq/(1-q^2)$, respectively. 
		\end{itemize}
	\end{enumerate}
	Let $\cS^{\op p}$ denote the set of strings $\w=(\u_1,\dots,\u_\ell)$ for $\ell\in\N_0$, where each $\u_j$ is a 01-block, a 0101-block, or a 10-block; the case $\ell = 0$ corresponds to the empty string, $\varnothing$. In particular, both the 2-partial trace and the 4-partial trace considered above are contained in $\cS^{\op p}$. Notice that (III) provides a general procedure for obtaining a random string in $\cS$ from a string $\w\in\cS^{\op p}$; let $\Delta^{\op{p}}(\w)$ denote the resulting law on strings in $\cS$. 
	
	Let $\nu$ and $\nu'$ denote the laws of the 4-partial traces associated with $\x_n$ and $\y_n$, respectively. Then $\nu$ and $\nu'$ are probability measures on $\cS^{\op p}$. Notice that sampling the 4-partial traces above is equivalent to sampling the random variables $Y_\ell$ and $Y_r$ describing the numbers of 01-blocks on either side of the defect in the associated 2-partial trace. We decompose
	\eqb
	\nu=\nu_1+\nu_2+\nu_3 \quad \text{ and } \quad
	\nu'=\nu_1+\nu'_2+\nu'_3,
	\label{eq19}
	\eqe
	where the measures $\nu_1$, $\nu_2$, $\nu'_2$, $\nu_3$, $\nu'_3$ are defined as follows. The measures $\nu_3$ and $\nu'_3$ correspond to the events that unusually many or few 01-blocks were deleted on at least one side of the defect. More precisely, for an appropriate constant $C_0$ to be defined later, let the event $A$ be given by
	\eqb
	\begin{split}
		&A := \bigl\{ |Y_\ell-a_n| \le C_0\sqrt{n\log n},\,
		|Y_r-a_n| \le C_0\sqrt{n\log n}\, \bigr\},\quad\text{ where } a_n:=n(1-q^2/2).
	\end{split}
	\label{eq35}
	\eqe
	For an arbitrary measure $\wh\nu$ on a measurable space $\wh\cS$ and with $\wh A\subset\wh\cS$, let $\1_{\wh A}\wh\nu$ denote the measure that assigns mass $\wh\nu(U\cap \wh A)$ to any measurable set $U\subset\wh\cS$. Define
	\eqbn
	\begin{split}
		&\nu_3 := \1_{A\compl}\nu\quad\text{ and }\quad \nu'_3:=\1_{A\compl}\nu'.
	\end{split}
	\eqen
	Now choose the measures $\nu_1$, $\nu_2$, and $\nu'_2$ so that \eqref{eq19} is satisfied and $\nu_1(\cS^{\op p})$ is maximized. In particular, the measures $\nu_2$ and $\nu'_2$ have disjoint support and $\nu_2(\cS^{\op p})=\nu'_2(\cS^{\op p})=\dtv( \nu-\nu_3,\nu'-\nu'_3)$. Note that the distribution of the total number of 01-blocks is the same under $\nu_2$ as it is under $\nu'_2$.
	
	By choosing $C_0$ sufficiently large and applying  \rref e.bin-tail/, we obtain
	\eqb \label{eq20}
	\nu_3(\cS^{\op p}) = \nu'_3(\cS^{\op p})\le n^{-10}.
	\eqe
	By \rref e.bin-ratio/, for an appropriate constant $C_1$, for all $n \ge 2$, and for all $x\in\cS^{\op p}$ such that $(\nu_1+\nu_2)(x)\neq 0$, 
	\eqb \label{eq27}
	\begin{split}
		\frac{|\nu_2(x)-\nu'_2(x)|}{(\nu_1+\nu_2)(x)}
		&\leq \max_{k_1,k_2\in\{0,\dots, \lfloor C_0\sqrt{n\log n}\rfloor \}}
		\biggl| 1 - \frac{\Psubbig_{\y_n}{ |Y_\ell - a_n| = k_1,\, |Y_r-a_n| =k_2 }}
		 {\Psubbig_{\x_n}{ |Y_\ell - a_n| = k_1,\, |Y_r-a_n| =k_2 }} \biggr|\\
		&\leq C_1 n^{-1/2}\sqrt{\log n}.
	\end{split}
	\eqe
	Furthermore,
	\eqb\label{eq36}
	\nu_1(x)+\nu_2(x) = 0\quad\Longleftrightarrow\quad \nu_1(x)+\nu'_2(x) = 0\qquad\qquad\forall x\in\cS^{\op p}
	\eqe
	since each of these conditions holds if and only if the binomial random variables $Y_\ell$ and $Y_r$ associated with $x$ satisfy the condition of \eqref{eq35}. Finally, \eqref{e.bin-TV} and \eqref{eq20} give that upon increasing $C_1$ if necessary,
	\eqb
	\nu_2(\cS^{\op p}) 
   =
	\nu'_2(\cS^{\op p}) 
	= \dtv( \nu-\nu_3,\nu'-\nu'_3) 
	\leq C_1 n^{-1/2}.
	\label{eq28}
	\eqe
	
	The bounds in the preceding paragraph are expressed in term of measures on 4-partial traces. We now transfer these bounds to the final traces. Let $\wt\mu_1 := \nu_1(\cS^{\op p}) \cdot \Delta^{\op p}(\w)$ for $\w\sim \nu_1(\cS^{\op p})^{-1} \nu_1$. Define $\wt\mu_2$, $\wt\mu_3$, $\wt\mu'_2$, and $\wt\mu'_3$ similarly from the measures 
	$\nu_2$, $\nu_3$, $\nu'_2$, and $\nu'_3$. Then 
	\eqb
	\Delta_{\x_n}=\wt\mu_1+\wt\mu_2+\wt\mu_3\quad\text{ and }
	\Delta_{\y_n}=\wt\mu_1+\wt\mu'_2+\wt\mu'_3.
	\label{eq38}
	\eqe
	By \eqref{eq20} and \eqref{eq28}, we have
	\eqb 
	\wt\mu_3(\cS) = \wt\mu'_3(\cS)\le n^{-10}\quad\text{ and }\quad
	\wt\mu_2(\cS) = \wt\mu'_2(\cS) \le C_1n^{-1/2}.
	\label{eq37}
	\eqe
	Furthermore, by \eqref{eq36},
	\eqb
	\wt\mu_1(x)+\wt\mu_2(x) = 0\quad\Longleftrightarrow\quad \wt\mu_1(x)+\wt\mu'_2(x) = 0\qquad\qquad\forall x\in\cS.
	\label{eq39}
	\eqe
	Finally, by Lemma \ref{prop-pushf} and \eqref{eq27}, with $\nu_A:=\1_A\nu = \nu_1 + \nu_2$ and $\nu'_A:=\1_A\nu' = \nu'_1 + \nu'_2$, 
	\eqb
	\Big\|\frac{\wt\mu_2-\wt\mu'_2}{\wt\mu_1+\wt\mu_2}\Big\|_{\ell^\infty( \wt\mu_1+\wt\mu_2 )}
	\leq
	\Big\|\frac{\nu_A-\nu'_A}{\nu_A} 
	\Big\|_{\ell^\infty( \nu_A )}
	\leq
	C_1n^{-1/2}\sqrt{\log n}
	.
	\label{eq40}
	\eqe
	The preceding paragraph provides a coupling of $\Delta_{\x_n}$ and $\Delta_{\y_n}$ such that the traces are identical with probability $\wt\mu_1(\cS)>1-C_1 n^{-1/2}\sqrt{\log n}-n^{-10}$. To obtain \eqref{eq6}, we construct a better coupling by making a second attempt to couple the traces on the event that the first coupling fails. 
	
	Let 
	$$
	\sigma:=\wt\mu_2(\cS)^{-1}\wt\mu_2\quad\text{ and }\quad
	\sigma':=\wt\mu'_2(\cS)^{-1}\wt\mu'_2
	$$ 
	denote the laws of the traces on the event that the first coupling attempt failed and that the event $A$ occurs. We will argue that for an appropriate constant $C_2$,
	\eqb
	\dtv(\sigma,\sigma') \leq 2C_2n^{-1/4}.
	\label{eq21}
	\eqe
	Before proving \eqref{eq21}, we explain how \eqref{eq21} implies the lemma. 
	
	Assuming \eqref{eq21} holds, by \eqref{eq37} we can write $\wt\mu_2=\ol\mu_2+\mu_2$ and $\wt\mu'_2=\ol\mu_2+\mu'_2$, where 
	\eqb
	\mu_2(\cS) = \mu'_2(\cS) \leq 2 C_2 n^{-1/4}\cdot C_1 n^{-1/2} = 2 C_1 C_2 n^{-3/4}.
	\label{eq45}
	\eqe
	With $\mu_1:=\wt\mu_1+\ol\mu_2$, $\mu_3:=\wt\mu_3$, and $\mu'_3:=\wt\mu'_3$, all the requirements of Lemma \ref{prop10} are satisfied because of \eqref{eq38}, \eqref{eq37}, \eqref{eq39}, \eqref{eq40}, and \eqref{eq45}. Note in particular that \eqref{eq5} is satisfied because  for any $x\in\cS$ for which $\mu_1(x)+\mu_2(x)\neq 0$, 
	\eqbn
	\frac{|\mu_2(x) -\mu'_2(x)|}{\mu_1(x)+\mu_2(x)}
   =
	\frac{\bigl|\big(\wt\mu_2(x)-\ol\mu_2(x)\big) - \big(\wt\mu'_2(x)-\ol\mu_2(x)\big)\bigr|}{\big(\wt\mu_1(x)+\ol\mu_2(x)\big)+\big(\wt\mu_2(x)-\ol\mu_2(x)\big)}
	=
	\frac{|\wt\mu_2(x) -\wt\mu'_2(x)|}{\wt\mu_1(x)+\wt\mu_2(x)}
	\leq
	C_1n^{-1/2}\sqrt{\log n}.
	\eqen

	We will now prove \eqref{eq21}. Let $\w$ denote the 4-partial trace associated with $\x_n$. Let $\w''\in\cS^{\op p}$ be identical in law to $\w$ except that the 10-block (i.e., the defect) is replaced by a $0101$-block, and denote by $\sigma''$ the law of $\Delta^{\op p}(\w'')$. 
	Because the lengths of the 4-partial traces associated to $\x_n$ or to $\y_n$ have the same law, we have symmetry in $\x_n$ and $\y_n$ and may apply the triangle inequality for $\dtv$. That is, it suffices to show the following in order to prove \eqref{eq21}:
	\eqb
	\dtv(\sigma,\sigma'') \leq C_2 n^{-1/4}.
	\label{eq12}
	\eqe
	When proving \eqref{eq12}, we condition on $Y_\ell$ and $Y_r$, so these random variables are viewed as constants. In particular, we will take the lengths of $\w$ and $\w''$ to be the same, i.e., the number of blocks in the two 4-partial traces is the same. We will construct a coupling of $\Delta^{\op p}(\w)$ and $\Delta^{\op p}(\w'')$ by sampling these random variables stepwise.
	
	Assume first that $Y_\ell$ and $Y_r$ are both even, namely, $2j_\ell$ and $2j_r$, respectively, for $j_\ell,j_r\in\N_0$. For each block $\u\in\cB_{0101}$, let $p_{\u}$ denote the probability that a 0101-block reduces to $\u$ in the definition of $\Delta^{\op p}$. The trace $\Delta^{\op p}(\w'')$ may be sampled in the following four steps:
	\begin{itemize}
      \item Sample the block $\ud\in\{10,1,0 \}$ that replaces the defect in $\w$, using probabilities as in the third bullet point of \eqref{III} above. 
		\item Let $(a_j)_{j\in\{ -j_\ell,\dots,j_r \}}$ be an i.i.d.\ sequence such that $a_j\in\{\sy0,\sy1,\sy2 \}$ for each $j$ and such that
		\eqb
		\P[a_j=\sy0]=p_\varnothing,\quad 
		\P[a_j=\sy1]=p_{\ud},\quad
		\P[a_j=\sy2]=1-p_\varnothing-p_{\ud}.
		\label{eq22}
		\eqe
		\item Let the $j$th block in $\w''$ reduce to $\varnothing$ (resp., $\ud$) if $a_j=\sy0$ (resp., $a_j=\sy1$).
		\item If $a_j=\sy2$, then the $j$th block in $\w''$ reduces to any given block $\u\in\cB_{0101}\setminus\{\varnothing,\ud \}$ with probability $p_{\u}/\P[a_j=\sy2]$, independently of what the other blocks reduce to.
	\end{itemize} 
	The trace $\Delta^{\op p}(\w)$ may be sampled in the exact same way, except that we condition on the event that $a_0=\sy1$. Recall the function $R$ defined preceding the statement of Lemma \ref{prop11}. Let $\rho''$ denote the law of $R\bigl((a_j)_{j\in\{ -j_\ell,\dots,j_r \}}\bigr)$, where the $a_j$s are i.i.d.\ given by \eqref{eq22}, and let $\rho$ denote $\rho''$ conditioned on $a_0=\sy1$. By \rref e.pushf-tv/,
	\eqbn
	\dtv(\sigma,\sigma'')
	\leq
	\dtv(\rho,\rho'').
	\eqen
	By Lemma \ref{prop11}, we have $\dtv(\rho,\rho'')\leq C_2n^{-1/4}$ for some constant $C_2$, which gives \eqref{eq21}.
	
	To conclude the proof, we briefly explain which modifications are needed to the above proof in the case where 
	$Y_\ell$ or $Y_r$ is odd. 
	In this case, the 4-partial traces $\w$ and $\w''$ will contain one or two 01-blocks. The total variation distance between $\Delta^{\op p}(\w)$ and $\Delta^{\op p}(\w'')$ will be the same in this case as before since we simply couple the 01-blocks of $\w$ and $\w''$ together so that they always reduce to the same block in $\{01,0,1,\varnothing \}$. 
\end{proof}

 \section{Upper bound: Proof of Proposition \ref{prop3}}
\label{sec3}
In this section we prove the lower bound of Proposition \ref{TVbounds}, which immediately implies Proposition \ref{prop3}. 

The idea in the proof of Proposition \ref{TVbounds} is to define an integer-valued random variable $Z(\xt)$ that is a function of the trace $\xt$ and such that $\dtv\bigl(Z(\xt), Z(\yt)\bigr)$ can be bounded from below. For $n\in\N$, $\x\in\cS_{4n}$, and $\xt=(\wt x_1,\dots,\wt x_\ell)$ the trace of $\x$, define $Z(\xt)$ as 
\eqb \label{defZ}
Z(\xt) := \#\bigl\{  k \st  2np+1 \le k \le (2np+\sqrt{npq})\wedge (\ell-1) , \,
\wt x_k = \wt x_{k+1} = 1
\bigr\}.
\eqe

We will use several lemmas in the proof of Proposition \ref{TVbounds}.
\begin{lemma}\label{prop7}
	We have $\Esubbig_{\y_n}{Z(\yt_n)} - \Esubbig_{\x_n}{Z(\xt_n)}=\Theta(n^{-1/2})$ and $\Esubbig_{\y_n}{Z(\yt_n)}>\Esubbig_{\x_n}{Z(\xt_n)}$ for all sufficiently large $n$.
\end{lemma}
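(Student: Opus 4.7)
The plan is to express $\E_{\x}[Z(\xt)]$ as a sum over ordered pairs of positions of $1$s in $\x$ and exploit the fact that $\x_n$ and $\y_n$ differ only on the four positions $D := \{2n-1,2n,2n+1,2n+2\}$. For any $\x$, the event $\wt x_k = \wt x_{k+1} = 1$ occurs iff some ordered pair $(i,j)$ of $1$-positions with $i<j$ has both endpoints retained, all positions strictly between deleted, and the retained bit at $i$ landing at trace position $k$; by independence this has probability $p^2 q^{j-i-1}\cdot \P[\Bin(i-1,p) = k-1]$. Summing $k$ over $\{2np+1,\ldots,2np+\sqrt{npq}\}$ (the constraint $k \le \ell-1$ is automatic when the event is non-null) yields
\eqbn
\E_\x[Z(\xt)] = \sum_{i<j:\,x_i=x_j=1} p^2 q^{j-i-1}\,g(i),
\eqen
where $g(i) := \P\bigl[\Bin(i-1,p) \in \{2np,\ldots,2np+\sqrt{npq}-1\}\bigr]$.

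Since $\x_n$ and $\y_n$ agree outside $D$, pairs with both endpoints outside $D$ contribute identically to both expectations. The $1$-positions inside $D$ are $\{2n-1,2n+2\}$ for $\x_n$ and $\{2n,2n+1\}$ for $\y_n$, so $\Delta := \E_{\y_n}[Z(\yt_n)] - \E_{\x_n}[Z(\xt_n)]$ decomposes into contributions from three groups: left-boundary pairs with $i \in L := \{2,4,\ldots,2n-2\}$ and $j \in D$; right-boundary pairs with $i \in D$ and $j \in R := \{2n+4,2n+6,\ldots,4n\}$; and the unique internal pair (namely $(2n-1,2n+2)$ in $\x_n$ and $(2n,2n+1)$ in $\y_n$). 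Summing the left (resp.\ right) group over $i$ (resp.\ $j$) produces a geometric series $\sum_{m\ge 0} q^{2m} = 1/(1-q^2)$ up to an exponentially small tail. Taylor-expanding $g(2n+h) = g_0 + h\alpha + O(h^2/n)$, with $g_0 := g(2n)$ and $\alpha := g(2n+1) - g(2n)$, a direct calculation shows the coefficient of $g_0$ in $\Delta$ vanishes (reducing to the identity $p + pq + q^2 = 1$), while the coefficient of $\alpha$ equals $2p^3/(1+q) > 0$; hence $\Delta = \frac{2p^3}{1+q}\,\alpha + O(n^{-1})$.

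Finally, the binomial recurrence gives the telescoping identity
\[
g(i+1)-g(i) = p\bigl[\P[\Bin(i-1,p) = 2np-1] - \P[\Bin(i-1,p) = 2np+\sqrt{npq}-1]\bigr],
\]
and applying the local central limit theorem to $\Bin(2n-1,p)$ shows $\alpha = \Theta(n^{-1/2})$ with $\alpha > 0$ for large $n$: the value $2np-1$ is $O(1)$ from the mean $(2n-1)p$, while $2np+\sqrt{npq}-1$ is $\asymp 1/\sqrt 2$ standard deviations above it, so the former density strictly dominates the latter. The main obstacle is the algebraic cancellation of the $g_0$-coefficient in $\Delta$: since $g_0 = \Theta(1)$, a nonzero coefficient would force $\Delta = \Theta(1)$ and contradict the claimed $\Theta(n^{-1/2})$ bound. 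This cancellation reflects the structural fact that a length-preserving shift of the defect by two positions preserves the leading-order expected $11$-count in the window, leaving the $n^{-1/2}$ signal to come solely from the slow variation of $g$ near $i=2n$.
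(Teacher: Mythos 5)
Your proposal is correct, and it takes a genuinely different route from the paper's.

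\medskip

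\textbf{What you do differently.} The paper couples $\x_n$ and $\y_n$ by imposing the \emph{same deletion pattern} on both strings, then expresses the indicator difference $\1[\wt y_k = \wt y_{k+1} = 1] - \1[\wt x_k = \wt x_{k+1} = 1]$ in terms of the events that a defect bit ($2n\pm1$) lands at position $k$ or $k+1$; the resulting four terms are then controlled via ratios of binomial point masses. Your approach dispenses with the coupling entirely: you write $\E_\x[Z] = \sum_{i<j:\,x_i=x_j=1} p^2 q^{j-i-1} g(i)$ with $g(i)$ a binomial window probability, observe that only pairs with an endpoint in $D=\{2n-1,\dots,2n+2\}$ contribute to the difference $\Delta$, resum the geometric tails $\sum q^{2\ell}$, Taylor-expand $g$ near $i=2n$, and then check that the constant-order ($g_0$) coefficient cancels exactly while the first-difference ($\alpha$) coefficient is $2p^3/(1+q)>0$. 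I re-derived both coefficients and they are correct (the $g_0$-cancellation reduces to $1-q^2 = p(1+q)$, i.e.\ $p=1-q$, as you note). Combined with $\alpha = p\bigl(\P[\Bin(2n-1,p)=2np-1]-\P[\Bin(2n-1,p)=\max I]\bigr)=\Theta(n^{-1/2})>0$ for large $n$ by the LCLT, this gives $\Delta=\frac{2p^3}{1+q}\alpha+O(n^{-1})=\Theta(n^{-1/2})>0$.

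\medskip

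\textbf{Trade-offs.} Your direct sum-over-pairs expansion makes the algebraic cancellation explicit and self-contained; its cost is that you must carefully track three pair classes (left-boundary, right-boundary, internal) and verify an exact identity. The paper's coupling shortcut avoids the pair bookkeeping but hides the cancellation inside the coupling; note also that, as stated, the paper's display \eqref{eq15} accounts only for the events $E(2n\pm1,\cdot)$ and not for $E(2n,\cdot),E(2n+2,\cdot)$, even though $\x_n$ and $\y_n$ differ at all four positions of $D$ — your decomposition handles all of $D$ symmetrically and sidesteps that issue. One small point you should make explicit: the Taylor expansion $g(2n+h)=g_0+h\alpha+O(h^2/n)$ is used inside $\sum_{\ell\ge1}q^{2\ell-2}g(2n-2\ell)$, which ranges over $\ell$ up to $n-1$; the quadratic remainder bound holds uniformly (the second difference of $g$ is $O(1/n)$), and for $\ell\gg1$ the geometric weight $q^{2\ell}$ makes the tail exponentially negligible in any case. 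With that remark included, the proof is complete.
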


\begin{proof}
	Let $E(j,k)$ be the event that bit $j$ in the input string is copied to position $k$ in the trace. If one or both of the positions are not well defined (i.e., if $j\not\in \{ 1,\dots,4n \}$ or if $k$ is smaller than 1 or larger than the length of the trace), then let $E(j,k)$ be the empty event. If $j, k\in \{ 1,\dots,4n \}$, then
	\eqb
	\Psubbig_{\x_n}{E(j,k)}=\Psubbig_{\y_n}{E(j,k)} = \binom{j-1}{k-1}p^{k-1}q^{j-k} \cdot p.
	\label{eq29}
	\eqe
	
	Let $\wt x_k$ (resp., $\wt y_k$) denote bit number $k$ of $\xt_n$ (resp., $\yt_n$). Assume we send the strings $\x_n$ and $\y_n$ through the deletion channel, and that the indices of the deleted bits are exactly the same for the two strings. Then the events $[\wt x_k = \wt x_{k+1} = 1]$ and $[\wt y_k = \wt y_{k+1} = 1]$ may differ only
	due to occurrence of the events $E(2n+1,k)$ or $E(2n-1,k)$ (which give $\wt y_k=1$ and $\wt x_k=1$, respectively), or
	due to occurrence of the events $E(2n+1,k+1)$ or $E(2n-1,k+1)$ (which give $\wt y_{k+1}=1$ and $\wt x_{k+1}=1$, respectively). Therefore, 
	\eqb
	\begin{split}
		&\Esubbig_{\y_n}{Z(\yt_n)} - \Esubbig_{\x_n}{Z(\xt_n)}\\		
		&\quad=\sum_{2np+1 \le k \leq 2np + \sqrt{npq}} \biggl(\Psubbig_{\y_n}{ E(2n+1,k)\cap\{\wt y_{k+1}=1 \} } 
		- \Psubbig_{\x_n}{ E(2n-1,k)\cap\{\wt x_{k+1}=1 \} }\\
		&\qquad\qquad\qquad+\Psubbig_{\y_n}{ \{ \wt y_k=1 \}\cap E(2n+1,k+1) } 
		- \Psubbig_{\x_n}{ \{\wt x_k=1 \}\cap E(2n-1,k+1) }\biggr).
	\end{split}
	\label{eq15}
	\eqe
	First we estimate the sum in \eqref{eq15} restricted to only the first two terms in each summand. 
	Notice that $\x_n$ restricted to bits $\{2n-1,2n,\dots,4n-2 \}$ is identical to $\y_n$ restricted to bits $\{2n+1,2n+2,\dots,4n \}$. On the event $E(2n+1,k)$, the value of $\wt y_{k+1}$ can be obtained by sending bits $\{2n+2,2n+3,\dots,4n \}$ of $\y_n$ through the deletion channel and recording the first bit, and the analogous statement holds for $E(2n-1,k)$, $\wt x_{k+1}$, $\x_n$, and $\{2n,2n+1,\dots,4n \}$. Therefore, if $\x^{\dagger}_n$ is the string identical to $\x_n$ but with the last two bits removed, we have
	$\Psubbig_{\y_n}{\wt y_{k+1}=1 \bigm|  E(2n+1,k)}=\Psubbig_{\x^{\dagger}_n}{\wt x^{\dagger}_{k+1}=1 \bigm|  E(2n-1,k)}$.
	The probability that the bits $\{4n-1,4n\}$ of $\x_n$ affect the value of $\wt y_{k+1}$ conditioned on the event $E(2n-1,k)$ is $O(q^{2n})$. 
	Using these observations and that the considered probabilities are of order 1, we get
	\eqbn
	\frac{\Psubbig_{\y_n}{\wt y_{k+1}=1 \bigm| E(2n+1,k)}}{\Psubbig_{\x_n}{\wt x_{k+1}=1 \bigm| E(2n-1,k)}}  	
	= 1+O(q^{2n}).
	\eqen
	This and \eqref{eq29} give, with $\xi := k - 2np \in [1, \sqrt{npq}]$,
	\eqbn
	\begin{split}
		\frac{\Psubbig_{\y_n}{ E(2n+1,k)\cap\{\wt y_{k+1}=1 \} }}
		{\Psubbig_{\x_n}{ E(2n-1,k)\cap\{\wt x_{k+1}=1 \} }}
		&=
		\frac{\Psubbig_{\y_n}{ E(2n+1,k)} }
		{\Psubbig_{\x_n}{ E(2n-1,k)}}
		\cdot 
		\frac{\Psubbig_{\y_n}{\wt y_{k+1}=1 \bigm| E(2n+1,k)}}{\Psubbig_{\x_n}{\wt x_{k+1}=1 \bigm| E(2n-1,k)}}\\
		&=
		\frac{2n(2n-1)q^2}{ (2n-k)(2n-k+1) }\bigl(1+O(q^{2n})\bigr)\\
		&=
		\frac{(1-1/(2n))}{ \bigl(1-\xi/(2nq)\bigr)\bigl(1-(\xi-1)/(2nq)\bigr) }\bigl(1+O(q^{2n})\bigr)\\
		&= 1+\Theta(\xi/n),
	\end{split}
	\eqen
	and that the ratio on the left side is greater than 1 for sufficiently large $n$. 
	Using this and that 
	\eqbn
	\begin{split}
	\Psubbig_{\x_n}{ E(2n-1,k)\cap\{\wt x_{k+1}=1 \} }
	&=
	\Psubbig_{\x_n}{ E(2n-1,k) }
	\cdot
	\Psubbig_{\x_n}{\wt x_{k+1}=1 \bigm| E(2n-1,k) }\\
	&=\Theta\Bigl(\frac{1}{\sqrt{n}}\Bigr)\quad 
	\mbox{when } 2np+1 \le k \leq 2np + \sqrt{npq}, 
	\end{split}
	\eqen
	we get
	\eqb
	\begin{split}
		&\sum_{2np+1 \le k \leq 2np + \sqrt{npq}}
		\Bigl(\Psubbig_{\y_n}{ E(2n+1,k)\cap\{\wt y_{k+1}=1 \} }- \Psubbig_{\x_n}{ E(2n-1,k)\cap\{\wt x_{k+1}=1 \} }\Bigr)\\
		&\qquad\qquad\qquad= 
		\Theta\biggl( 
		\sum_{2np+1 \le k \leq 2np + \sqrt{npq}}
		\Psubbig_{\x_n}{ E(2n-1,k)\cap\{\wt x_{k+1}=1 \} }
		\cdot \frac \xi n
		\biggr)=\Theta\Bigl(\frac{1}{\sqrt{n}}\Bigr),
	\end{split}
	\label{eq18}
	\eqe
	and that the left side of \eqref{eq18} is positive for all large $n$.
	
	Now we bound the sum in \eqref{eq15} restricted to only the third and the fourth term in each summand, i.e., we bound the sum
	\begin{multline}
		\sum_{2np+1 \le k \leq 2np + \sqrt{npq}}
		\Bigl(\Psubbig_{\y_n}{ \{ \wt y_k=1 \}\cap E(2n+1,k+1) } - \Psubbig_{\x_n}{ \{\wt x_k=1 \}\cap E(2n-1,k+1) }\Bigr)\\
		=\sum_{2np+1 \le k \leq 2np + \sqrt{npq}}\ 
		\sum_{\substack{j\leq 2n-1,\\ j \text{ odd}}}
		\Bigl(\Psubbig_{\y_n}{ E(j+1,k)\cap E(2n+1,k+1) } \\
		- \Psubbig_{\x_n}{ E(j-1,k)\cap E(2n-1,k+1) }\Bigr).
	\label{eq31}
	\end{multline}
	First we notice that the contribution in \eqref{eq31} from the terms for which $|j-2n|>\sqrt{npq}$ is $q^{O(\sqrt n)}$, since all the bits whose position is in $\{ j+2,\dots, 2n-2 \}$ are deleted on this event, whence
	\eqbn
	\Psubbig_{\x_n}{ E(j-1,k)\cap E(2n-1,k+1) } \leq q^{ 2n-j-3 },
	\eqen
	and a similar bound holds for $\y_n$. Therefore, in the remainder of the proof, we will consider only the terms of \eqref{eq31} for which $|j-2n|\leq \sqrt{npq}$. Notice that this condition implies $|jp-k|\leq |jp-2np|+|2np-k|\leq 2\sqrt{npq}$. By the definition of the events $E(\cdot,\cdot)$, we have for $j < 2n$,
	\eqbn
	\Psubbig_{\y_n}{  E(2n+1,k+1)\bigm| E(j+1,k) } 
	= q^{2n-j-1}p
	=
	\Psubbig_{\x_n}{  E(2n-1,k+1)\bigm| E(j-1,k) }.
	\eqen
	Using this and that $|jp-k|\leq 2\sqrt{npq}$, writing $X\sim \Bin(j-2,p)$ and $Y\sim \Bin(j,p)$, we have
	\eqb
	\begin{split}
		\frac{\Psubbig_{\y_n}{ E(j+1,k)\cap E(2n+1,k+1) }}  {\Psubbig_{\x_n}{ E(j-1,k)\cap E(2n-1,k+1) }}
		&=\frac{\Psubbig_{\y_n}{ E(j+1,k) }}{\Psubbig_{\x_n}{ E(j-1,k) }}
		= \frac{\P[ Y=k - 1 ]}{\P[ X=k - 1 ]}\\
		&= 1+O\Bigl( \frac{|jp-k|+1}{n} \Bigr)\leq 1+O\Bigl(\frac{1}{\sqrt{n}}\Bigr),
	\end{split}
	\label{eq32}
	\eqe
	where we apply \eqref{e.bin-ratio} in the second-to-last step. Furthermore, by \eqref{bin-diff} and the fact that $\E[X]<\E[Y]=jp \le (2n-1)p < 2np \le k - 1$, 
   we get $\P[ Y=k - 1 ] > \P[ X=k - 1 ]$, so
   the left side of \eqref{eq32} is greater than 1. Now we get that the right side of \eqref{eq31} is positive for large $n$ and bounded above by 
	\eqbn
	\begin{split}
		\sum_{2np+1 \le k \leq 2np + \sqrt{npq}}\ 
		\sum_{\substack{j\leq 2n-1,\\ j \text{ odd}}}
		\Psubbig_{\x_n}{ E(j-1,k)\cap E(2n-1,k+1) } \cdot O\Bigl(\frac{1}{\sqrt{n}}\Bigr)= O\Bigl(\frac{1}{\sqrt{n}}\Bigr).
	\end{split}
	\eqen
	Combining this with \eqref{eq18} gives the lemma.
\end{proof}

\begin{lemma}
	There is a constant $c>0$ depending only on $q$ such that for all $r>0$ and $n\in\N$,
	$$
	\PBig{ \bigl|Z(\yt_n)-\E[Z(\yt_n)]\bigr|>rn^{1/4} }\leq 2e^{-cr^{2}}\quad \text{ and }\quad
	\PBig{ \bigl|Z(\xt_n)-\E[Z(\xt_n)]\bigr|>rn^{1/4} }\leq 2e^{-cr^{2}}.
	$$ 
	\label{prop20}
\end{lemma}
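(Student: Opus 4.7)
The plan is to reduce $Z(\yt_n)$ to a bounded functional of a two-state, well-mixing Markov chain, and then apply Azuma--Hoeffding to a Doob martingale. The argument for $Z(\xt_n)$ is identical.

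First, I would replace the deletion variables $D_j$ by the sequence of gaps $G_k := N_k - N_{k-1}$ (with $N_0 := 0$) between consecutive retained-bit positions, which are i.i.d.\ $\op{Geom}(p)$ (truncated when the cumulative sum exceeds $4n$). Setting $\eta_k := G_k \bmod 2$, the $\eta_k$ are i.i.d.\ $\op{Bernoulli}\bigl(1/(1+q)\bigr)$, and the parity sum $Y_k := \eta_1 \oplus \cdots \oplus \eta_k \in\{0,1\}$ forms a symmetric two-state Markov chain whose non-trivial eigenvalue is $\rho := (q-1)/(q+1) \in (-1, 0)$; one easily checks $\E[Y_j \mid Y_k] = \tfrac12 + (Y_k - \tfrac12)\rho^{j-k}$ for $j \ge k$.

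Next, I would show that $Z(\yt_n)$ equals $\wt Z := \sum_{k\in W} Y_k Y_{k+1}$ up to a bounded additive error. Since $\y_n$ is the periodic string $(01)^{2n}$ except at the two defect bits $y_{2n+1}=1$, $y_{2n+2}=0$, we have $\wt y_k = 1 \Leftrightarrow y_{N_k} = 1 \Leftrightarrow N_k$ is even $\Leftrightarrow Y_k = 1$ whenever $N_k \notin \{2n+1, 2n+2\}$. At most two indices $k$ have $N_k$ in the defect, so $I_k = Y_k Y_{k+1}$ for all but at most four values of $k$ in $W$, yielding an $O(1)$ correction. The random truncation of $W$ at $L-1$ is inactive except on the event $\{L < 2np + \sqrt{npq} + 1\}$, which by Hoeffding's inequality has probability at most $e^{-cn}$; this is absorbed into the constant.

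Finally, I would apply Azuma--Hoeffding to the Doob martingale $M_k := \E[\wt Z \mid \eta_1, \ldots, \eta_k]$ relative to the i.i.d.\ sequence $(\eta_i)$. Writing $W = [a,b]$, the Markov property and the above formula for $\E[Y_j Y_{j+1}\mid Y_k]$ make $M_k - M_{k-1}$ an explicit function of $Y_{k-1}, Y_k$ whose dependence on $Y_k$ is a geometric series in $\rho$; a short calculation yields $|M_k - M_{k-1}| \le C_q\cdot|\rho|^{\max(0,\,a-k)}$ for $k \le b+1$ and $= 0$ otherwise. Hence $\sum_k |M_k - M_{k-1}|^2 \le C_q^2\bigl(|W| + 1 + \sum_{m \ge 1}|\rho|^{2m}\bigr) = O(\sqrt n)$, and Azuma--Hoeffding gives $\P(|\wt Z - \E\wt Z| > t) \le 2\exp\bigl(-t^2/(C\sqrt n)\bigr)$. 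Setting $t = rn^{1/4}$ yields the advertised $2e^{-cr^2}$.

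The main obstacle will be the bookkeeping in the middle step: verifying that the defect contribution is truly a bounded constant (not growing with $n$), passing cleanly between the deletion variables and the gap/parity variables, and handling the negligible event $\{L < \max W + 1\}$ without introducing spurious factors. The crucial combinatorial simplification is that, away from the defect, $\wt y_k$ depends on the deletion pattern only through the parity of the cumulative number of retained bits, which reduces the analysis to a finite-state chain with uniform spectral gap.
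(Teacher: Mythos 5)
Your proposal is correct in substance but takes a genuinely different route from the paper. The paper's proof first couples $Z$ to the analogous count $V$ for the half-infinite periodic string $(01)^{\N}$ by introducing the random index $J$ of the $\flr{2np}$-th retained bit, shows that $V\1_E$ is a function of only $m = O(\sqrt{n})$ retention indicators $u_1,\dots,u_m$ with bounded differences on the event $E$, and then invokes the McDiarmid variant from \cite{combes} that tolerates differences bounded only with high probability. You instead work directly with $Z(\yt_n)$ via the parity chain $Y_k = N_k \bmod 2$ driven by the i.i.d.\ gap-parities $\eta_k$, and apply Azuma--Hoeffding to the Doob martingale $M_k = \E[\wt Z \mid \eta_1,\dots,\eta_k]$; the spectral gap $|\rho| = (1-q)/(1+q) < 1$ makes the increments for $k$ preceding the window $W$ decay geometrically, so that although the martingale has $\Theta(n)$ terms, $\sum_k (M_k - M_{k-1})^2 = O(|W|) + O(1) = O(\sqrt{n})$. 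The two strategies ``localize'' differently: the paper localizes in the number of relevant retention indicators (but then needs the coupling via $J$ and a non-standard concentration inequality), while you localize via the exponential mixing of the two-state chain (avoiding the coupling to $V$ entirely, and using only plain Azuma since your increments are a.s.\ bounded). Two small inaccuracies: since $Y_k = N_k \bmod 2$ and $y_j = 1$ exactly when $j$ is even (away from the defect), you should have $\wt y_k = 1 \Leftrightarrow Y_k = 0$, not $Y_k = 1$; this only requires replacing $Y_k Y_{k+1}$ by $(1-Y_k)(1-Y_{k+1})$ and does not affect the martingale bound. Also, ``i.i.d.\ $\op{Geom}(p)$ (truncated when the cumulative sum exceeds $4n$)'' is loosely phrased — the clean formulation is to take genuinely i.i.d.\ geometric gaps on all of $\N$, define $\wt Z$ from the untruncated parity chain, and note that $|Z - \wt Z| \le 4$ on the event $\{L - 1 \ge \max W\}$ (which fails with probability $e^{-\Omega(n)}$); you do effectively say this, but the parenthetical could mislead. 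With those adjustments the argument is complete and, in my view, somewhat cleaner than the paper's in that it dispenses with the auxiliary string $V$ and the random-shift coupling.
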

\begin{proof}
	We will prove the result only for $\x_n$ since the proof for
	$\y_n$  is identical, and we write $Z$ instead of $Z(\x_n)$ to simplify notation. Recall from Notation \ref{not1} that all constants $c_1,c_2,\dots$ may depend on $q$ but on no other parameters. 
	
	First we prove a concentration result for a random variable $V$ that is closely related to $Z$. Let 
	$\w:=(w_1,w_2,\dots)=(01)^{\N }$ 
	be a half-infinite bit string with period $01$, and let 
	$\wt\w:=(\wt w_1,\wt w_2,\dots)$ 
	denote the trace obtained by sending $\w$ through the deletion channel with deletion probability $q$. Then set
	\eqbn
	V :=  \#\bigl\{  k \in [1, \sqrt{npq} \, ]
	\st
	\wt w_k = \wt w_{k+1} = 1
	\bigr\}.
	\eqen
	For $j\in\N$, let $u_j\sim\text{Bernoulli}(p)$ be the indicator that bit $j$ of $\w$ is not deleted. Let $E$ be the event that at least $\sqrt{npq}$ bits are \emph{not} deleted among the first $m:=\lceil 2\sqrt{npq}/p\rceil$ bits of the trace, i.e., 
	\eqbn
	E := \biggl[ \sum_{j=1}^{m} u_j \geq \sqrt{npq} \biggr].
	\eqen
	Then $\P[E\compl]\leq \exp(-c_1\sqrt{n})$ for some constant $c_1>0$ by a large-deviations bound. 
	
	Notice that $V\1_{E}$ can be written as a function of $u_1,\dots,u_m$. Furthermore, changing one $u_j$ changes $V\1_E$ by at most 2
	if both  $u_1,\dots,u_m$ and the modified sequence lie in the event $E$. By \cite{combes},
		which is a variant of McDiarmid's inequality when differences are bounded with high probability, there is a constant $c'_1>0$ such that 
	\eqbn
	\PBig{ \bigl|V\1_{E}-\E[V \mid {E}]\bigr|>rn^{1/4} } \leq 2\exp(-c'_1 r^{2} )\qquad\forall r>0.
	\eqen
   Because $\E[V \1_E]\leq \E[V \mid E] \le \E[V \1_E] + \sqrt{n p q} \exp(-c_1\sqrt{n})/(1 - e^{-c_1})$, it follows that there is a constant $c_2 > 0$ such that
	\eqb
	\PBig{ \bigl|V\1_{E}-\E[V \1_ {E}]\bigr|>rn^{1/4} } \leq 2\exp(-c_2 r^{2} )\qquad\forall r>0.
	\label{eq42}
	\eqe
	
	Now we return to the string $\x_n$. Let $u'_j$ be the indicator that the bit in position $j$ of $\x_n$ is not deleted. Let $J$ be the random variable describing the position of the bit copied to position $\lfloor 2np\rfloor $ of $\xt_n$, i.e., 
	\eqbn
	J := \inf \Big\{j\in\N\st \sum_{i=1}^{j}u'_i=\lfloor 2np\rfloor \Big \}.
	\eqen
	Extend $u'_j$ to be a Bernoulli$(p)$ process also for $j>4n$, so that $J$ is a.s.\ well defined as a natural number.
	Let $E'$ be the event that at least $\sqrt{npq}$ bits are \emph{not} deleted among the bits in position $\{ J+1,J+2,\dots,J+m \}$, i.e.,
	\eqbn
	E' := \biggl[ \sum_{j=J+1}^{J+m} u'_j \geq \sqrt{npq} \biggr].
	\eqen
	Then $\P[E']=\P[E]$. If the event  $E'':=[ J+m<4n ]$ occurs, then $V\1_{E}$ and $Z\1_{E'}$ can be coupled so they differ by at most $2$, e.g., by taking $u_j=u'_{J+j}$ for all $j$. For some constant $c_3$, $\>\Pbig{(E'')\compl}\leq \exp(-c_3 n)$. Combining these observations with the fact that $V$ and $Z$ are bounded by $\sqrt{npq}$, we obtain that for all sufficiently large $n$,
	\eqbn
	\bigl|\E[V\1_{E}]-\E[Z]\bigr| 
	\leq  
	\bigl|\E[V\1_{E}]-\E[Z\1_{E' \cap E''}]\bigr|+ \sqrt{npq}\bigl(\P[(E')\compl]+\P[(E'')\compl]\bigr) \leq 3.
	\eqen
	Assembling the above bounds, we obtain that for all sufficiently large $n$,
	\eqb
	\begin{split}
		\Pbig{ |Z-\E[Z]|>rn^{1/4}+5 }
		&\leq 
		\PBig{E' \cap E'' \cap \bigl[ |Z-\E[Z]|>rn^{1/4}+5 \bigr] } + \Pbig{(E')\compl}+\Pbig{(E'')\compl}\\
		&\leq
		\Pbig{ |V\1_E-\E[V\1_E]|>rn^{1/4} } 
		+ \exp(-c_1n^{1/2})+ \exp(-c_3 n)\\
		&\leq
		2\exp(-c_2 r^{2})+ \exp(-c_1n^{1/2})+ \exp(-c_3 n).
	\end{split}		
	\label{eq43}
	\eqe
	The first term on the right side dominates for $r=o(n^{1/4})$. Since $Z$ is bounded by $\sqrt{npq}$, the left side of \eqref{eq43} is zero for $r>\sqrt{pq}n^{1/4}$. Combining these two observations yields the lemma.
\end{proof}

\begin{lem}\label{prop21}
	Let $X$ and $Y$ be discrete, real-valued random variables such that 
	$$
   \forall r > 0 \qquad	\Pbig{|X|>r }\vee \Pbig{|Y|>r} \leq 2\exp(-r^{2}).
	$$
	Then
	\eqbn
	\bigl|\E[X]-\E[Y]\bigr| \leq 4\,\dtv(X,Y) \sqrt{\log \frac{2}{\dtv(X,Y)} }.
	\eqen
\end{lem}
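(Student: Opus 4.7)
My plan is to reduce to a bounded-range comparison by invoking a maximal coupling of $X$ and $Y$ and then optimizing a truncation level tuned to $\delta := \dtv(X,Y)$.

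First, I would use the standard coupling inequality to construct a joint distribution of $(X,Y)$ on a single probability space with the prescribed marginals such that $\P[X \ne Y] = \delta$; the case $\delta = 0$ is trivial since then $X \eqD Y$. Since $X = Y$ on the complementary event, one obtains
\[
|\E[X] - \E[Y]|
= \bigl|\E[(X - Y)\1_{X \ne Y}]\bigr|
\le \E\bigl[|X|\1_{X \ne Y}\bigr] + \E\bigl[|Y|\1_{X \ne Y}\bigr].
\]

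Next, for each of the two summands on the right I would apply the layer-cake identity together with the sub-Gaussian tail hypothesis: for any event $A$ with $\P[A] = \delta$ and any random variable $Z$ satisfying $\P[|Z| > r] \le 2 e^{-r^2}$,
\[
\E\bigl[|Z|\1_A\bigr]
= \int_0^\infty \P\bigl[|Z| > r,\, A\bigr]\, dr
\le \int_0^\infty \min\bigl(\delta,\, 2 e^{-r^2}\bigr)\, dr.
\]
Choosing the truncation level $M := \sqrt{\log(2/\delta)}$, so that $2 e^{-M^2} = \delta$, I would split the integral at $M$: on $[0, M]$ use $\delta$; on $(M, \infty)$ use the tail bound together with the elementary Gaussian tail estimate $\int_M^\infty e^{-r^2}\, dr \le e^{-M^2}/(2M)$. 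This gives $\E[|Z|\1_A] \le M\delta + \delta/(2M)$, and summing over the two choices $Z = X$ and $Z = Y$ produces $|\E[X] - \E[Y]| \le 2 M \delta + \delta/M$.

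Finally, since $\delta \le 1$ forces $M^2 = \log(2/\delta) \ge \log 2 > 1/2$, one has $1/M \le 2M$ and hence $\delta/M \le 2 M \delta$, so the bound collapses to $|\E[X] - \E[Y]| \le 4 M \delta = 4\,\dtv(X,Y) \sqrt{\log(2/\dtv(X,Y))}$, as required. The main delicate point will be ensuring the absolute constant is exactly $4$: both the use of the maximal coupling (rather than a truncation argument that would pick up an extra factor of $2$ on the bounded piece) and the specific choice $M = \sqrt{\log(2/\delta)}$ (rather than, e.g., $\sqrt{\log(1/\delta)}$) are what make the two contributions balance inside the factor of $4$.
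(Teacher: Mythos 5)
Your proof is correct and essentially the same as the paper's. The paper writes $\mu_X = \mu + \mu_X^- + \mu_X^+$ and $\mu_Y = \mu + \mu_Y^- + \mu_Y^+$ with a common part $\mu$ of total mass $1-\delta$, which is precisely the measure-theoretic form of the maximal coupling you invoke (the residuals $\mu_X - \mu$ and $\mu_Y - \mu$ are the conditional laws of $X$ and $Y$ on the event $[X \ne Y]$); both arguments then reduce each contribution to $\int_0^\infty \min(\delta,\, 2e^{-r^2})\,dr$ and bound it by splitting at $M = \sqrt{\log(2/\delta)}$, arriving at the same constant.
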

\begin{proof}
	We let $\delta := \dtv(X,Y)$ to simplify notation. Letting $\mu_X$ and $\mu_Y$ denote the law of $X$ and $Y$, respectively, write
	\eqbn
	\mu_X = \mu+\mu^-_X+\mu^+_X\quad\text{ and }\quad
	\mu_Y = \mu+\mu^-_Y+\mu^+_Y
	,	\eqen
	where $\mu(\R)=1-\delta$, 
	$\>\mu^-_X$ and $\mu^-_Y$ are supported on $(-\infty,0)$, and 
	$\mu^+_X$ and $\mu^+_Y$ are supported on $[0,\infty)$. Then
	\eqb
	\bigl|\E[X]-\E[Y]\bigr|
	\leq
	\Big|\sum x \,\mu^-_X(x)\Big| + \Big|\sum x\, \mu^+_X(x)\Big|
	+
	\Big|\sum x\, \mu^-_Y(x)\Big| + \Big|\sum x\, \mu^+_Y(x)\Big|.
	\label{eq41}
	\eqe
   We have
	\eqbn
	\begin{split}
		\Big|\sum x \,\mu^-_X(x)\Big| + \Big|\sum x \,\mu^+_X(x)\Big|
		&=\int_{\R_+} 
		\mu^-_X\bigl((-\infty, -r]\bigr) + 
		\mu^+_X\bigl([r,\infty)\bigr)\,dr \\
		&\leq \int_{\R_+} \min\{ 2e^{-r^{2}},\delta \} \,dr 
		\le \delta \sqrt{\log \frac{2}{\delta}}+\frac{\delta}{2 \sqrt{\log \frac2\delta}}
      \le 2\delta \sqrt{\log \frac{2}{\delta}}.
	\end{split}
	\eqen
	Inserting this estimate and analogous estimates for $\mu^-_Y$ and $\mu^+_Y$	
	into \eqref{eq41}, we obtain the lemma,
	\eqbn
	\bigl|\E[X]-\E[Y]\bigr| \leq 4\delta \sqrt{\log \frac{2}{\delta}}.
   \qedhere
	\eqen
\end{proof}

\begin{proof}[Proof of Proposition \ref{TVbounds}, lower bound]
	By \eqref{e.pushf-tv},
	\eqb
	\dtv\bigl(Z(\xt_n),Z(\yt_n)\bigr) \leq \dtv(\dlch_{\x_n},\dlch_{\y_n}).
	\label{eq46}
	\eqe
	Letting $c$ denote the constant in Lemma \ref{prop20} and 
   \eqbn 
   a:=\frac 12 \bigl( \E[Z(\xt_n)]+\E[Z(\yt_n)] \bigr)=\Theta(n^{1/2}),
   \eqen
   define
	\eqbn
	X := \frac{\bigl(Z(\xt_n)-a\bigr)c}{2n^{1/4}}\quad\text{ and }\quad
	Y := \frac{\bigl(Z(\yt_n)-a\bigr)c}{2n^{1/4}}.
	\eqen 
	Notice that 
	\eqb
	\dtv(X,Y) = \dtv\bigl(Z(\xt_n),Z(\yt_n)\bigr).
	\label{eq44}
	\eqe
	By Lemma \ref{prop7},
	\eqbn
	\bigl|\E[ Z(\xt_n) ]-a\bigr|\vee \bigl|\E[ Z(\yt_n) ]-a\bigr| = \Theta(n^{-1/2}).
	\eqen
	Combining this with Lemma \ref{prop20}, we see that $X$ and $Y$ satisfy the condition of Lemma \ref{prop21} for all sufficiently large $n$. The proposition now follows from Lemma \ref{prop21}, \eqref{eq46}, \eqref{eq44}, and Lemma \ref{prop7}.
\end{proof}

\section{Lower bound for random strings: Proof of Proposition \ref{prop4}}
\label{sec4}

Recall the reconstruction problem for random strings described in Section \ref{sec1}. Proposition \ref{prop5} below transfers lower bounds for deterministic strings to lower bounds for random strings, yielding almost exponentially small success probability.  Proposition \ref{prop5} is proved by adapting the method of \cite[Theorem 1]{MPV14}. 

Proposition \ref{prop4} follows from Theorem \ref{prop1} and Proposition \ref{prop5} applied with the function $f(n) = \flr{c n^{5/4}/\sqrt{\log n}}$. The lower bound of $\Omega(\log^2 n)$ from \cite[Theorem 1]{MPV14} may be obtained from the proposition with $f(n)=\lfloor cn\rfloor$. 

In order to state the proposition, we need to describe the trace reconstruction problem with \emph{random} $G$. We say that all $n$-bit strings  can be \dfn{reconstructed with probability at least $1 - \eps$ from $T$ traces with additional randomness} if there is a Borel function $G'\colon\cS^T\times[0,1]\to\{0,1 \}^n$ 
such that for all $\x\in\cS_n$,
\eqb \label{defextra}
\int_{0}^{1}\P_{\x}[G'(\mathfrak X,t ) = \x]\,dt \ge 1-\eps\,.
\eqe

For the purpose of distinguishing between two input strings, reconstruction with extra randomness is equivalent to reconstruction without extra randomness, at least if we are willing to change $\eps$ by a factor of 2: Let $\x \ne \y$. As noted in Appendix \ref{a1}, 
\[
\min_G \Bigl(\Psubbig_\x{G(\mathfrak X) = \y} + \Psubbig_\y{G(\mathfrak X) = \x}\Bigr) = 1 - \dtv(\dlch_\x, \dlch_\y).
\]
Therefore, for any $G'\colon\cS^T\times[0,1]\to\{\x,\y\}$,
\[
\int_{0}^{1}\Bigl(\Psubbig_\x{G'(\mathfrak X,t) = \y} + \Psubbig_\y{G'(\mathfrak X,t) = \x}\Bigr)\,dt \geq 1 - \dtv(\dlch_\x, \dlch_\y).
\]
Since the maximum error probability is at most this sum of error probabilities and also is at least half the same sum, our claim follows.
In particular, the lower bound $\Omega\bigl(\log(1/\eps)n^{5/4}/\sqrt{\log n}\,\bigr)$ in Theorem \ref{prop1} also holds if we consider reconstruction with extra randomness.
A similar definition holds for reconstructing random strings with extra randomness when the random string is chosen according to a probability measure, $\rho$: one simply takes the expectation of the left side of \eqref{defextra} with $\x \sim \rho$. 

\begin{prop}\label{prop5}
	Suppose that for all $n\in\N$, the probability that all $n$-bit strings can be reconstructed with $f(n)\cdot n$ traces is at most $1 - e^{-n}$, even with extra randomness. Then for all large $n\in\N$, the probability of reconstructing random $n$-bit strings with $\flr{\frac12f(\frac12 \log n) \cdot \log n}$ traces is at most $\exp(-n^{0.15})$, even with extra randomness.
\end{prop}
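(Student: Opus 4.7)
My plan is to proceed by contrapositive, adapting the argument of McGregor--Price--Vorotnikova. Assume that for some large $n$ there exists a reconstructor $G'$ (possibly with extra randomness) that reconstructs random $n$-bit strings from $T := \flr{\frac12 f(\frac12\log n)\log n}$ traces with success probability greater than $\exp(-n^{0.15})$, and construct from $G'$ a worst-case reconstructor $H$ for $m$-bit strings with $m := \flr{\frac12\log n}$, using $f(m)\cdot m$ traces, whose worst-case success probability exceeds $1-e^{-m}$. Since $f(m)\cdot m \le T$ by the choice of parameters, this contradicts the hypothesis applied at length $m$.

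The key ingredient is an embedding reduction. Given $s\in\cS_m$ and $T$ traces $\wt s_1,\ldots,\wt s_T$ of $s$, the reconstructor $H$ draws (via its own extra randomness) a uniform $U\in\cS_{n-m}$ and a uniform insertion position $r\in\{1,\ldots,n-m+1\}$, forms $X := U_{[1,r-1]}\,s\,U_{[r,n-m]}$, and extends each $\wt s_k$ to a valid i.i.d.\ trace of $X$ by prepending and appending freshly simulated traces of $U_{[1,r-1]}$ and $U_{[r,n-m]}$. Running $G'$ on these $T$ traces of $X$ produces $\hat X$, from which $H$ outputs $\hat X_{[r,r+m-1]}$. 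When $s$ is itself drawn from $\mu_m$, the marginal of $X$ is exactly $\mu_n$, so the $\mu_m$-averaged success of $H$ on $s$ coincides with the random-string success of $G'$, and hence exceeds $\exp(-n^{0.15})$.

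The main obstacle is lifting this average-case bound to a worst-case bound on every $s$. Following MPV, the approach is independent amplification: rerun the base embedding $K$ times with fresh extra randomness $(U^{(k)},r^{(k)})$ while reusing the same traces $\wt s_k$, so that conditional on $(\wt s_k)$ the outputs of the $K$ trials are independent. A Chernoff-type concentration argument, combined with the symmetrizing effect of the uniform insertion position $r$ and filler $U$ on the identity of the embedded worst-case string, then boosts the per-trial success into a worst-case success of at least $1-e^{-m}$ for every $s$, while still respecting the trace budget $T$. The exponent $0.15$ emerges from balancing the number $K$ of amplification trials against the number of distinct $\wt s$-samples available and the combinatorial parameters of the embedding; this is the technically most delicate step, and it is what forces the bound $\exp(-n^{0.15})$ rather than a sharper one.
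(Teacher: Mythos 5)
Your approach is genuinely different from the paper's and contains a gap that I do not believe can be repaired in the form you describe.

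The paper proves the statement in the forward direction: it invokes Yao's minimax theorem to extract from the worst-case hypothesis a single \emph{hard prior} $\rho$ on $\cS_r$ (with $r = \flr{\frac12\log n}$) for which every deterministic reconstructor fails with probability at least $e^{-r}$; it then writes the uniform measure $\mu_r$ as the mixture $\lambda = 2^{-r}\rho + (1-2^{-r})\sigma$, tiles a uniform $n$-bit string by $\flr{n/r}$ independent blocks each drawn from this mixture, and multiplies the per-block failure probabilities, giving $\bigl(1 - 2^{-r}e^{-r}\bigr)^{\flr{n/r}}$. The whole point is that the failure event is pinned down on a \emph{fixed} distribution $\rho$ (not on a worst-case string that could move as the reconstructor changes), and independence across blocks does the amplification.

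Your contrapositive-plus-embedding route runs into trouble precisely at the step you flag as ``technically most delicate.'' Concretely: (1) The embedding gives you only an \emph{average-case} statement at length $m$ --- the quantity you control is $\E_{s\sim\mu_m}\bigl[\P[H\text{ succeeds on }s]\bigr] > \exp(-n^{0.15})$ --- and there is no symmetry that forces the per-string success $p(s)$ to be uniform in $s$. For a fixed $s$, the string $X = U_{[1,r-1]}\,s\,U_{[r,n-m]}$ is uniform only on the set of $n$-bit strings containing $s$ at a uniformly chosen position, and $G'$ may simply perform poorly on that set for some $s$ (e.g., highly repetitive $s$), so $\min_s p(s)$ is not controlled by $\E_s[p(s)]$. (2) The amplification step cannot boost $p(s)$ by rerunning with fresh $(U^{(k)}, r^{(k)})$ while reusing the same traces $\wt s_1,\dots,\wt s_T$: conditional on those traces, each trial succeeds with the same conditional probability, so repetition only lets you sample from that fixed conditional distribution --- it cannot push the success probability above what the traces themselves permit. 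Moreover, with per-trial success as low as $\exp(-n^{0.15})$, you have no verification mechanism and no majority-vote argument to identify a correct trial among the $K$ attempts; Chernoff concentration tells you the \emph{fraction} of successful trials concentrates near $p(s)$, which is far below $1/2$. In short, you are trying to pass from a tiny average-case success probability to a near-1 worst-case success probability by resampling only auxiliary randomness, and that leap has no support. The minimax step in the paper is exactly the device that avoids this obstruction: it lets one argue about a fixed hard distribution $\rho$ directly, rather than about the worst-case string after the fact.
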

\begin{proof}
	Let $r := \flr{\frac12\log n}$ and $T:=f(r)r$. 
	It was observed by Yao \cite{Y77} that von Neumann's minimax theorem yields
	\eqbn
	\min_{G'}\max_{\x\in\cS_r} \int_0^1 \Psubbig_{\x}{G'(\frk X, t) \neq \x}\,dt=\max_{\rho} \min_G \sum_{\x\in\cS_r} \Psubbig_{\x}{G(\frk X) \neq \x}\cdot\rho(\x),
	\eqen
	where we take the minima over functions $G'\colon\cS^{T}\times[0,1]\to\{0,1 \}^r$ and $G\colon\cS^{T}\to\{0,1 \}^r$, and the second maximum is over probability measures $\rho$ on $\cS_r$. By assumption, the left side is at least equal to $e^{-r}$. Therefore, there is some probability measure $\rho$ on $r$-bit strings such that $\sum_{\x\in\cS_r} \Psubbig_{\x}{G(\frk X)\neq \x}\cdot \rho(\x)\geq e^{-r}$ for
	all $G$, i.e., the probability of reconstructing an $r$-bit string 	chosen according to $\rho$ with $T$ traces is at most
	$1 - e^{-r}$. Furthermore, this result for $r$-bit strings sampled from $\rho$ holds also for reconstruction with additional randomness, since for any $\wh G\colon \cS^{T} \times[0,1] \to \{0,1 \}^r$ and $t\in[0,1]$,
	\eqbn
	\min_G \sum_{\x\in\cS_r} \Psubbig_{\x}{G(\frk X) \neq \x}\cdot\rho(\x)
	\leq 
	\sum_{\x\in\cS_r} \Psubbig_{\x}{\wh G(\frk X,t) \neq \x}\cdot\rho(\x),
	\eqen
	which implies
	\eqbn
	\min_G \sum_{\x\in\cS_r} \Psubbig_{\x}{G(\frk X) \neq \x}\cdot\rho(\x)
	\leq 
	\min_{\wh G} \sum_{\x\in\cS_r}\int_{0}^{1} \Psubbig_{\x}{\wh G(\frk X,t) \neq \x}\,dt\cdot\rho(\x).
	\eqen
	
	Sample the random uniform string $\x \in \cS_n$ in the following manner.
	Denote 
   \[
   \z_j := (x_{(j-1) r + 1}, x_{(j-1)r + 2}, \ldots, x_{j r})
   \quad\mbox{for}\quad 1 \le j \le n/r
   \]
	and $\w :=(x_{\flr{n/r}r + 1}, x_{\flr{n/r}r + 2}, \ldots, x_n)$. Write $\lambda$ for the uniform distribution on strings of length $r$ and define
	$\sigma := (\lambda - 2^{-r}\rho)/(1 - 2^{-r})$, which is a probability measure. Let $(Q_j)_{j \ge 1}$ be a Bernoulli($2^{-r}$) process.
	For each $j$, choose $\z_j$ from $\sigma$ 
	if $Q_j = 0$ and 
	from $\rho$ if $Q_j = 1$, independently for different $j$.
	Let $\w$ be uniform (independent of the preceding).
   Let $\frk X$ be the $T$ traces obtained from $\x$; it is the trace-wise concatenation of
	the traces $\frk Z_j \in \cS^T$ obtained from $\z_j$ and $\frk W \in \cS^T$ obtained from $\w$.	
	The probability of reconstructing $\x$ from $\frk X$ is at most
	the probability of reconstructing $\x$ from\footnote{We did not define this reconstruction problem, but its meaning should be obvious.} $\frk Z_1, \ldots, \frk
	Z_{\flr{n/r}r}, \frk W$ (because we could simply ignore the additional
	information in the separate traces $\frk Z_i$ and $\frk W$ that is not inherent in $\frk X$).
	Conditional on $Q_j = 1$, the probability of reconstructing $\z_j$ with $T$ traces is at most $1 - e^{-r}$ by assumption.
	Therefore, the unconditional probability of reconstructing $\z_j$ from $\frk Z_j$
	is at most $1 - 2^{-r} e^{-r}$.	Since these events are independent in $j$,
	we obtain that the probability of reconstructing $\x$ from $\frk X$ is at most $\bigl(1 - 2^{-r} e^{-r}\bigr)^{\flr{n/r}}\leq \exp( -0.9\cdot 2^{-r} e^{-r}n/r )$ for $n/r \ge 10$. Inserting the definition of $r$ gives the result. 
\end{proof}

\appendix
\section{Inequalities for distances between measures}
\label{app1}
Throughout this appendix, $\mu$ and $\nu$ are positive measures on a countable set, $X$. Most material in this appendix is standard and elementary, although we have not found a good reference presenting all the material needed for the body of our paper. One possible novelty, however, is Lemma \ref{prop2}, which we have not seen elsewhere. This lemma, though completely elementary, is a key input to the proof of Theorem \ref{prop1}, and we believe it is also useful to bound the squared Hellinger distance between two measures in many other contexts.

\subsection{Inequalities for Hellinger distance and total variation distance}
The \dfn{total variation} distance between $\mu$ and $\nu$ is defined by 
\[
\dtv(\mu, \nu)
:=
\frac12 \sum_{x \in X} \bigl|\mu(x) - \nu(x)\bigr|.
\]
Thus, in order to maximize $\sigma(X)$ over all decompositions $\mu = \sigma + \mu'$, $\nu = \sigma + \nu'$, where $\sigma$, $\mu'$, and $\nu'$ are positive measures on $X$, one takes $\sigma := \mu \wedge \nu$, yielding $\mu'(X) + \nu'(X) = 2\,\dtv(\mu, \nu)$.
If $\mu$ and $\nu$ are probability measures, then
\rlabel e.TVproba
{\dtv(\mu, \nu)
=
\max_{A \subseteq X} \bigl[\mu(A) - \nu(A)\bigr].
}
The \dfn{Hellinger} distance between $\mu$ and $\nu$ is defined by\footnote{Some authors use another normalization, e.g., with a factor of $1/\sqrt{2}$ on the right side.}
\[
\dHe(\mu, \nu)
:=
\biggl( \sum_{x \in X} \bigl[\sqrt{\mu(x)} - \sqrt{\nu(x)}\,\bigr]^2 \biggr)^{1/2}
.
\]
It is well known (e.g., \cite[Lemma 2.3]{Tsybakov}) that for probability measures $\mu$ and $\nu$, we have
\rlabel e.Hel-TV
{
\dtv(\mu, \nu)
\le
\dHe(\mu, \nu)
\le
\sqrt{2\, \dtv(\mu, \nu)}
.
}
The next lemma shows that the right-hand inequality can be strengthened if for all $x\in X$, the ratio $\mu(x)/\nu(x)$ is close to 1. Before stating it, we introduce the notation $\|f\|_{\ell^\infty(\nu)}$ for a function $f\colon X\to\R$
\eqb
\|f\|_{\ell^\infty(\nu)} := \sup\bigl\{ |f(x)|\st x\in X,\, \nu(x)\neq 0 \bigr\}.
\label{eq50}
\eqe
\begin{lemma}\label{prop2} For all positive measures $\mu$ and $\nu$, we have
	\rlabel e.Hel-infty-TV
	{
		\dHes(\mu, \nu)
		\le
		\mu\{x \st \nu(x) = 0\} +
		2 \cdot \Bigl\|\frac{\mu - \nu}{\nu}\Bigr\|_{\ell^\infty(\nu)}
		\cdot
		\dtv(\mu, \nu)
		.
	}
\end{lemma}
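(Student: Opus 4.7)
The plan is to split the sum defining $\dHes(\mu,\nu)$ according to whether $\nu(x)=0$ or $\nu(x)>0$, handle each part separately, and combine.

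First, I would write
\[
\dHes(\mu,\nu) = \sum_{x: \nu(x) = 0} \mu(x) + \sum_{x: \nu(x) > 0} \bigl(\sqrt{\mu(x)} - \sqrt{\nu(x)}\,\bigr)^2 ,
\]
since for $x$ with $\nu(x)=0$ the $x$-term in the Hellinger sum is exactly $\mu(x)$. The first sum is $\mu\{x : \nu(x) = 0\}$, which accounts for the first term on the right-hand side of the claimed inequality.

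For the second sum, the key identity is the algebraic factorization
\[
\bigl(\sqrt{a} - \sqrt{b}\,\bigr)^2 = \frac{(a-b)^2}{\bigl(\sqrt{a} + \sqrt{b}\,\bigr)^2},
\]
valid whenever $a+b > 0$. Since $\bigl(\sqrt{\mu(x)} + \sqrt{\nu(x)}\,\bigr)^2 \ge \nu(x)$ when $\nu(x) > 0$, this gives the pointwise bound
\[
\bigl(\sqrt{\mu(x)} - \sqrt{\nu(x)}\,\bigr)^2 \le \frac{(\mu(x)-\nu(x))^2}{\nu(x)} = \frac{|\mu(x)-\nu(x)|}{\nu(x)} \cdot |\mu(x)-\nu(x)|.
\]
Pulling out the $\ell^\infty(\nu)$-norm of $(\mu - \nu)/\nu$ and summing over $x$ with $\nu(x) > 0$ yields
\[
\sum_{x: \nu(x) > 0} \bigl(\sqrt{\mu(x)} - \sqrt{\nu(x)}\,\bigr)^2 \le \Bigl\|\frac{\mu - \nu}{\nu}\Bigr\|_{\ell^\infty(\nu)} \sum_{x: \nu(x)>0} |\mu(x) - \nu(x)| \le \Bigl\|\frac{\mu - \nu}{\nu}\Bigr\|_{\ell^\infty(\nu)} \cdot 2\,\dtv(\mu,\nu),
\]
where the last step uses $\sum_x |\mu(x) - \nu(x)| = 2\,\dtv(\mu,\nu)$ from the definition given at the start of the appendix. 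Adding the two contributions gives the claimed inequality.

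There is no real obstacle here: the only mild point is checking the inequality $\bigl(\sqrt{\mu(x)} + \sqrt{\nu(x)}\,\bigr)^2 \ge \nu(x)$, which is immediate, and remembering that the bound $\sum |\mu - \nu| \le 2\,\dtv$ is an equality for signed/positive measures by the stated definition, so no extra care is needed when $\mu$ and $\nu$ are not probability measures.
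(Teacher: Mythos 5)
Your proof is correct and essentially the same as the paper's: the paper applies $|a-1|\le|a^2-1|$ to $a=\sqrt{\mu(x)/\nu(x)}$, which after clearing denominators is exactly your pointwise bound $\bigl(\sqrt{\mu(x)}-\sqrt{\nu(x)}\bigr)^2\le(\mu(x)-\nu(x))^2/\nu(x)$, and the rest (extracting the $\ell^\infty$ factor and recognizing the $\ell^1$ sum as $\le 2\,\dtv$) matches the paper verbatim. Your derivation via the factorization $(\sqrt a-\sqrt b)^2=(a-b)^2/(\sqrt a+\sqrt b)^2$ and $(\sqrt a+\sqrt b)^2\ge b$ is just a mild cosmetic variant of the same elementary inequality.
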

\begin{proof}
	Since $|a - 1| \le |a^2 - 1|$ for all $a \ge 0$, we have
	\eqaln{
		\dHes(\mu, \nu)
		- \mu\{x \st \nu(x) = 0\}
		&=
		\sum_{x \in X\st \nu(x)\neq 0} \Bigl(\sqrt{\frac{\mu(x)}{\nu(x)}} - 1\Bigr)^2 \nu(x)
		\le
		\sum_{x \in X\st \nu(x)\neq 0} \Bigl(\frac{\mu(x)}{\nu(x)} - 1\Bigr)^2 \nu(x)
		\\ &\le
		\Bigl\|\frac{\mu}{\nu} - 1\Bigr\|_{\ell^\infty(\nu)}
		\cdot \Bigl\|\frac{\mu}{\nu} - 1\Bigr\|_{\ell^1(\nu)}
		\,,
	}
	which is \rref e.Hel-infty-TV/.
\end{proof} 
One way to bound this $\ell^\infty$-norm is to use the following observation.
\begin{lem}
	Let $\rho$ and $\sigma$ be positive measures on a countable space $Y$, 
	let $\lambda$ be a probability measure on a measurable space $Z$, and let $\phi\colon Y\times Z\to X$ be a function. Defining $\mu := \phi_* (\rho\times \lambda)$ and $\nu := \phi_*(\sigma\times \lambda)$ to be the pushforward measures, we have
	\eqb \label{e.pushf}
		\Bigl\|\frac{\mu}{\nu} - 1\Bigr\|_{\ell^\infty(\nu)}
		\le
		\Bigl\|\frac{\rho}{\sigma} - 1\Bigr\|_{\ell^\infty(\sigma)}.
	\eqe
	\label{prop-pushf}
\end{lem}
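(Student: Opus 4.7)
The plan is a direct calculation that expresses $\mu(x)/\nu(x)$ as a weighted average of the values of $\rho(y)/\sigma(y)$, and then bounds its deviation from $1$ uniformly. First I would unpack the pushforwards pointwise: for each $x \in X$, set $\lambda_y(x) := \lambda\bigl\{z \in Z \st \phi(y,z) = x\bigr\}$, so that by the definition of the pushforward,
\[
\mu(x) = \sum_{y \in Y} \rho(y) \, \lambda_y(x), \qquad \nu(x) = \sum_{y \in Y} \sigma(y) \, \lambda_y(x).
\]
Write $M := \|\rho/\sigma - 1\|_{\ell^\infty(\sigma)}$. We may assume $M < \infty$, since otherwise \eqref{e.pushf} is trivial. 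By the definition \eqref{eq50}, this translates to the pointwise bound $|\rho(y) - \sigma(y)| \le M \, \sigma(y)$ for every $y$ with $\sigma(y) \ne 0$.

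Next, for any $x \in X$ with $\nu(x) \ne 0$, applying the triangle inequality on the sum defining $\mu(x) - \nu(x)$ together with the pointwise bound just obtained yields
\[
|\mu(x) - \nu(x)| \le \sum_{y \in Y} |\rho(y) - \sigma(y)| \, \lambda_y(x) \le M \sum_{y \in Y} \sigma(y) \, \lambda_y(x) = M \, \nu(x).
\]
Dividing by $\nu(x)$ gives $|\mu(x)/\nu(x) - 1| \le M$, and taking the supremum over all such $x$ recovers \eqref{e.pushf}.

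I do not expect a substantive obstacle, as this is essentially a one-line convexity (or data-processing) computation. The only delicate point worth flagging is the handling of indices $y$ with $\sigma(y) = 0$ but $\rho(y) > 0$: such $y$ are invisible to the right-hand side of \eqref{e.pushf} but can contribute positively to $\mu(x)$ for some $x$ in the support of $\nu$, so one must either assume $\rho \ll \sigma$ or interpret the right-hand side as $+\infty$ in the non-absolutely-continuous case. In the paper's use of the lemma within the proof of Lemma \ref{prop10}, the support identity \eqref{eq36} for $\nu_A$ and $\nu'_A$ guarantees that this issue does not arise.
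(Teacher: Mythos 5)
Your proof is correct and is essentially the same as the paper's: both expand $\mu(x)-\nu(x)$ as a $\lambda$-weighted sum of $\rho(y)-\sigma(y)$, apply the pointwise bound $|\rho(y)-\sigma(y)|\le M\,\sigma(y)$ term by term, and re-sum to get $|\mu(x)-\nu(x)|\le M\,\nu(x)$. Your remark about indices $y$ with $\sigma(y)=0$ but $\rho(y)>0$ is a genuine subtlety that the paper glosses over: the paper's proof silently restricts the sum to $\{y : \sigma(y)>0\}$, which is only an identity (rather than a one-sided bound) when $\rho\ll\sigma$ on the relevant indices. As you note, in the paper's application this is guaranteed by the support equivalence \eqref{eq36}, so the gap is harmless there, but your flag is appropriate and shows you understood the statement rather than just pattern-matching.
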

\begin{proof}
	If $\nu(x)>0$, then there must exist $y\in Y$ and $z\in Z$ such that $x=\phi(y,z)$ and $\sigma(y) > 0$. Therefore, the following holds for any $x\in X$ for which $\nu(x)>0$, with $\delta$ denoting the right side of \eqref{e.pushf} and $U\sim \lambda$:
\eqbn
\begin{split}
	|\mu(x) - \nu(x)|
	&=
	\biggl|\sum_{y\in Y\st \sigma(y)>0} \big(\rho(y)-\sigma(y)\big) \Pbig{\phi(y,U)=x}\biggr|\\
	&\leq
	\delta \sum_{y\in Y\st \sigma(y)>0} \sigma(y) \Pbig{\phi(y,U)=x}
	= \delta \cdot \nu(x).
\qedhere
\end{split}
\eqen
\end{proof}

By \rref e.TVproba/, pushing forward two probability\footnote{We remark that \rref e.pushf-tv/ also holds when $\rho$ and $\sigma$ are not probability measures.} measures by the same map cannot increase the total variation distance: 
\rlabel e.pushf-tv
{\dtv(\phi_*\rho, \phi_* \sigma) \le \dtv(\rho, \sigma).
}

The following is immediate from the definition:
\rlabel e.Hel-small
{
\dHes(\mu, \nu)
\le
\mu(X) + \nu(X)
.
}

\procl l.Hel-sum
For any positive measures $\mu_1$, $\mu_2$, $\nu_1$, and $\nu_2$ on $X$, we have
\[
\dHes(\mu_1+\mu_2, \nu_1+\nu_2)
\le
\dHes(\mu_1, \nu_1) + \dHes(\mu_2, \nu_2)
.
\]
\endprocl

\rproof
This is immediate from the inequality 
\[
\bigl(\sqrt{a+b} - \sqrt{c+d}\,\bigr)^2
\le
\bigl(\sqrt a - \sqrt c\,\bigr)^2 + \bigl(\sqrt b - \sqrt d\,\bigr)^2,\qquad a, b, c, d \ge 0.
\qedhere
\]
\Qed

The following is well known (see, e.g., \cite[page 100]{reiss89}):
\procl l.Hel-prod
For any probability measures $\mu_1, \mu_2, \nu_1, \nu_2$ on $X$, we have
\[
\dHes(\mu_1 \times \mu_2, \nu_1 \times \nu_2)
\le
\dHes(\mu_1, \nu_1) + \dHes(\mu_2, \nu_2)
.
\tag*{\qed}
\]
\endprocl

\subsection{Distinguishing between measures by independent sampling}\label{a1}
In this section, we consider two distinct probability measures $\mu$ and $\nu$, and for $m\in\N$, we consider $m$ independent samples from one of the measures. We are interested in how large we need to choose $m$ in order to determine whether our samples are from $\mu$ or $\nu$. Our bounds are expressed in terms of the Hellinger distance and the total variation distance between the measures. 

Consider first the case where $m = 1$. Let $G \colon X \to
\{\mu, \nu\}$ be a function that (roughly speaking) says whether some element $x\in X$ is more likely to be sampled from $\mu$ or $\nu$. We are interested in the sum of the
error probabilities $\mu\bigl[G(x)
= \nu\bigr] + \nu\bigl[G(x) = \mu\bigr]$. By \rref e.TVproba/,
 the error probability
sum is minimized by taking
\[
G(x)
:=
\begin{cases}
\mu &\text{if } \mu(x) \ge \nu(x),\\
\nu &\text{otherwise,}\\
\end{cases}
\]
in which case we get that the error probability sum equals $1 - \dtv(\mu, \nu)$.

Replacing $\mu,\nu$ by $\mu^m,\nu^m$ in this discussion, we get that for general $m$, the number of samples required to distinguish between $\mu$ and $\nu$ is determined precisely by $\dtv(\mu^m, \nu^m)$.

Now we derive a lower bound for the number of required samples, expressed in terms of $\dtv(\mu,\nu)$. It is well known that total variation distance can be expressed via
coupling:
\[
\dtv(\mu, \nu)
=
\min \bigl\{ \P[U \neq V] \st U \sim \mu,\, V \sim \nu \bigr\}
,
\]
where the minimum is taken over all couplings of $U$ and $V$. By using couplings of the pairs $(\mu_i, \nu_i)$ that are independent in $i$, it follows that for probability measures $\mu_1,\dots,\mu_n,\nu_1,\dots,\nu_n$,
\eqb
1 - \dtv(\mu_1 \times \cdots \times \mu_n, \nu_1 \times \cdots \times
\nu_n)
\ge
\prod_{i=1}^n \bigl[1 - \dtv(\mu_i, \nu_i)\bigr]
.
\label{eq34}
\eqe
In particular,
\rlabel e.powerTV
{
1 - \dtv(\mu^m, \nu^m)
\ge
e^{-\alpha(\mu, \nu) \cdot m\cdot \dtv(\mu, \nu)}
,
}
where 
\[
\alpha(\mu, \nu) := -\frac{\log\bigl[1-\dtv(\mu, \nu)\bigr]}{\dtv(\mu,
\nu)}.
\]
Note that $\alpha(\mu,\nu)$ approaches 1 as $\dtv(\mu, \nu) \to 0$, and that, e.g., $\alpha(\mu,\nu)$ is at most 3/2 when
$\dtv(\mu, \nu) \le 1/2$.
We can interpret \rref e.powerTV/ as saying that in order to distinguish
$\mu$ from $\nu$ when given $m$ i.i.d.\ samples from an unknown choice from
$\{\mu, \nu\}$, we need at least 
\eqb
m = \Omega\bigl(1/\dtv(\mu, \nu)\bigr)
\label{eq23}
\eqe
samples. Alternatively, we can say that if $r$ samples yield an error probability at
least $1/e$, then $r \flr{\log (1/\eps)}$ samples yield
an error probability at least $\eps$. 

Next we derive an upper bound for the number of required samples, also expressed in terms of $\dtv(\mu,\nu)$. Namely, we will prove the well-known result that we need at most $m = O\bigl(1/\dtvs(\mu,
\nu)\bigr)$ samples. By \rref e.TVproba/, we can find an event $A\subset X$ such that $\mu(A)-\nu(A)=\dtv(\mu,\nu)$. Given $m$ independent samples $\frk X_m=(x_1,\dots,x_m)$ from one of the measures, let $\ol u:=m^{-1}\sum_{j=1}^m \1_{[x_j\in A]}$ be the fraction of times that $A$ occurs. Define
 \eqbn
 G(\frk X_m) := 
 \begin{cases}
 	\mu\qquad&\text{if\,\,} \ol u>\nu(A) + \frac 12 \dtv(\mu,\nu)=\frac 12\bigl( \mu(A)+\nu(A) \bigr),\\ 
 	\nu \qquad&\text{otherwise.}
 \end{cases}
 \eqen
 An application of the inequality of Hoeffding--Azuma gives the following bound for the sum of the error probabilities:
 \eqbn
 \mu\bigl[ G(\frk X_m)=\nu \bigr] + \nu\bigl[ G(\frk X_m)=\mu \bigr]
 \leq 2\exp\bigl( -m\cdot \dtvs(\mu,\nu)/2 \bigr).
 \eqen
 In particular, 
 \eqb
 m\geq \frac{2}{\dtvs(\mu,\nu)} \log \frac{2}{\eps}
 \label{numberofsamples}
 \eqe
 samples are sufficient to distinguish between the measures with error probability at most $\eps$.
 
Both the lower and upper bounds for the number of samples required in terms of total variation distance are sharp, as illustrated by the following examples, where we use $\bern(s)$ to denote the law of a Bernoulli random variable with parameter $s\in[0,1]$: (1) $\mu := \bern(0)$ and $\nu := \bern(\delta)$, where $\dtv(\mu, \nu) = \delta$ and $\Theta(\delta^{-1})$ samples are necessary and sufficient, and (2) $\mu := \bern(1/2)$ and $\nu := \bern(1/2 + \delta)$, where $\dtv(\mu, \nu) = \delta$ and $\Theta(\delta^{-2})$ samples are necessary and sufficient. More generally, for $\alpha \in [0, 1]$, if $\mu := \bern(\delta^{1-\alpha}/2)$ and $\nu := \bern(\delta^{1-\alpha}/2 + \delta)$, then $\dtv(\mu, \nu) = \delta$ and $\Theta(\delta^{-1-\alpha})$ samples are necessary and sufficient.

If $\dHes(\mu, \nu)$ is much smaller than $\dtv(\mu, \nu)$, then the lower bound \eqref{eq23} can be improved. The following type of result seems to be folklore; we saw a version of it
in \cite[Corollary~1]{MPV14}. It says that  $m=\Omega\bigl(1/\dHes(\mu, \nu)\bigr)$ samples are necessary to distinguish between $\mu$ and $\nu$.

\procl l.powerTV-Hel
If $\mu$ and $\nu$ are probability measures with $\dHe(\mu, \nu) \le 1/2$,
then for $m \ge 1/\bigl(4\, \dHes(\mu, \nu)\bigr)$,
\[
1 - \dtv(\mu^m, \nu^m)
\ge
\exp\bigl\{- 9\,m\cdot \dHes(\mu, \nu)\bigr\}
\,.
\]
In particular, $1 - \dtv(\mu^m, \nu^m) \ge \eps$ if
\[
m
\le
\frac{1}{9\,\dHes(\mu, \nu)} \log \frac 1\eps
\,.
\]
\endprocl

\rproof
Define $r := \bflr{1/\bigl(4 \, \dHes(\mu, \nu)\bigr)} \ge 1$.
By \rref e.Hel-TV/ and \rref l.Hel-prod/,
\[
\dtvs(\mu^r, \nu^r)
\le
\dHes(\mu^r, \nu^r)
\le
r\, \dHes(\mu, \nu)
\le
1/4.
\]
This allows us to apply \rref e.powerTV/ as follows: 
\eqaln{
1 - \dtv(\mu^m, \nu^m)
&\ge
\exp\Bigl\{-\frac32 \cdot \Big\lceil\frac{m}{r}\Big\rceil \dtv(\mu^r, \nu^r)\Bigr\}
\ge
\exp\Bigl\{-3 \cdot \frac{m}{r} \dtv(\mu^r, \nu^r)\Bigr\}
\\ &\ge
\exp\Bigl\{-3 \cdot \frac{m}{r} \sqrt r\, \dHe(\mu, \nu)\Bigr\}
\ge
\exp\Bigl\{-3\sqrt{8} \cdot m \cdot\dHes(\mu, \nu)\Bigr\}
\,,
}
where in the last step, we used $r \ge 1/\bigl(8\,\dHes(\mu, \nu)\bigr)$.
\Qed

\bibliographystyle{hmralphaabbrv}
\bibliography{ref}

\end{document}